\documentclass[11pt]{amsart}

\usepackage{amsmath,amssymb}
\usepackage{amsthm}
\usepackage[abbrev]{amsrefs}
\usepackage{latexsym}
\usepackage{txfonts}
\usepackage{graphicx}
\usepackage{tikz}
\usepackage{bm}

\allowdisplaybreaks[1]

\title[]{$p$-adic properties of division polynomials and algebraic sigma functions}
\date{}
\author{Yu Katagiri \and Shinichi Kobayashi}

\newtheorem{thm}{Theorem}[section]
\newtheorem*{thm*}{Theorem}
\newtheorem{lem}[thm]{Lemma}
\newtheorem*{lem*}{Lemma}
\newtheorem{prop}[thm]{Proposition}
\newtheorem*{prop*}{Proposition}
\newtheorem{cor}[thm]{Corollary}
\newtheorem*{cor*}{Corollary}

\theoremstyle{definition}
\newtheorem{dfn}[thm]{Definition}
\newtheorem*{dfn*}{Definition}

\newtheorem*{ex*}{Example}
\newtheorem{rmk}[thm]{Remark}
\newtheorem*{rmk*}{Remark}
\newtheorem{conj}[thm]{Conjecture}
\newtheorem*{conj*}{Conjecture}

\newcommand{\Q}{\mathbb{Q}}
\newcommand{\Z}{\mathbb{Z}}
\newcommand{\C}{\mathbb{C}}

\makeatletter
\@addtoreset{equation}{section}

\makeatother

\makeatletter\@namedef{subjclassname@2020}{\textup{2020} Mathematics Subject Classification}\makeatother

\keywords{elliptic curves, division polynomials, elliptic divisibility sequences.}
\subjclass[2020]{Primary: 11G07; Secondary: 14H52}

\begin{document}

\begin{abstract}
Let $p \geq 5$ be a prime, let $K$ be a finite extension of $\mathbb{Q}_p$,
and let $E/K$ be an elliptic curve with good reduction.
Let $F_n$ denote the $n$-division polynomial of $E$.
Silverman proved that if the reduction is ordinary, then for every
$P \in E(K) \setminus \hat{E}(K)$ and a suitable power $q$ of $p$, the
sequences $(F_{mq^k}(P))_{k \geq 0}$  converge
$p$-adically to limits that are algebraic over the field of definition of $E$.
In both the ordinary and supersingular cases, we show that these sequences
converge, and determine these limits
explicitly in terms of the values of Mumford's algebraic sigma function
attached to the Teichm\"uller lift of the prime-to-$p$ torsion component of the reduction
of $P$.
In particular, the limits are algebraic in the supersingular case as well.
As an application, we obtain explicit $p$-adic limit formulas for
nonsingular elliptic divisibility sequences.
\end{abstract}
\maketitle

\section{Introduction}

Division polynomials are to an elliptic curve what the functions $X^n-1$ are
to the multiplicative group. This analogy is especially suggestive over a
$p$-adic field. If $a$ is a $p$-adic unit and $q$ is a suitable power of $p$,
then $a^{q^k}$ converges to the Teichm\"uller lift of its reduction, and
consequently the sequence $a^{mq^k}-1$ has an explicit $p$-adic limit. This
paper studies an elliptic counterpart of this elementary phenomenon.

Let $F_n$ denote the normalized $n$-division polynomial of an elliptic curve $E$. For a point $P$, the value $F_n(P)$ controls the denominators of the coordinates of $nP$, and the functions $F_n$ satisfy the nonlinear recurrence underlying elliptic divisibility sequences. More precisely, every nonsingular elliptic divisibility sequence is, up to a factor of the form $\gamma^{n^2-1}$, obtained from a sequence of values $F_n(P)$. Thus the $p$-adic behavior of division polynomial values is closely connected with the $p$-adic behavior of elliptic divisibility sequences.

This leads to two distinct questions. First, do suitable $p$-power subsequences $F_{mq^k}(P)$ converge? Second, when they converge, can their limits be described intrinsically and shown to be algebraic? The second question is the main concern of the present paper.

Let $K$ be a field of characteristic different from $2$ and let $E$ be an elliptic curve over $K$ given by the equation
\begin{align}\label{WeiEq}
y^2=4x^3-g_2x-g_3.
\end{align}
We use the local parameter $t=-2x/y \in K(E)$ at the origin $O$. For each positive integer $n$, the {\it $n$-division polynomial} $F_n$ of $E$ is characterized as the unique rational function $F_n \in K(E)$ satisfying
\begin{align*}
{\rm div}(F_n)=[n]^{\ast}(O)-n^2(O), \quad \left(\frac{t^{n^2}F_n}{[n]^{\ast}(t)}\right)(O)=1.
\end{align*}
For a fixed point $P \in E(\overline{K})$, the values $F_n(P)$ encode the denominators of
the coordinates of $nP$. More precisely, there exist rational functions $G_n, H_n \in K(E)$ such that
\begin{align*}
nP=\left(\frac{ G_n(P)}{F_n(P)^2}, \frac{ H_n(P)}{F_n(P)^3}\right)
\end{align*}
for every positive integer $n$ such that $nP \neq O$.
The division polynomials also satisfy the recurrence relation
\begin{align*}
F_{m+n}F_{m-n}=F_{m+1}F_{m-1}F_n^2-F_{n+1}F_{n-1}F_m^2
\end{align*}
for all $1 \leq n < m$. This recurrence relation is the source of the notion
of elliptic divisibility sequences, which we recall in Section~4.  
For the definition of division polynomials for a general Weierstrass
equation over an arbitrary base ring, and for further properties, see \cite[Appendix I]{MT91}.

The starting point of this paper is a theorem of Silverman on the $p$-adic
limits of the sequence $(F_n(P))_{n\geq 1}$. When $K$ is a finite field,
Silverman studied the periodicity of this sequence; when $K$ is a $p$-adic
field, he studied its $p$-adic convergence properties. We recall the latter
result. Let $p$ be an odd prime and let $K$ be a finite extension of $\Q_p$. We write $\mathfrak{m}_K$ for the maximal ideal of the ring of integers $\mathcal{O}_K$ of $K$.

\begin{thm}[{\cite[Theorem 12]{Si05a}}]\label{Silverman thm}
Let $E$ be an elliptic curve over $K$ and let $P \in E(K)$. 
Fix a minimal Weierstrass equation for $E$ and assume that $E$ has good ordinary reduction and that $P$ is not a point of the formal group $\hat{E}(K)$ of $E$.
\begin{enumerate}
\item There exists a power $q=p^N$ such that for every positive integer $m$, the limit
\begin{align*}
\lim_{k\to \infty} F_{mq^k} (P)
\end{align*}
exists and belongs to $\mathcal{O}_K$.
\item The limit in $(1)$ is algebraic over the field generated by the coefficients of the chosen Weierstrass
equation for $E$. In particular, if $E$ is defined over a number field, then the limit is algebraic.
\item Let $r \geq 2$ be the order of the reduction of $P$ modulo the maximal ideal of $K$, and let $r'$ be the prime-to-$p$ part of $r$. Then the limit in $(1)$ is zero if and only if ${r'} \mid m$.
\end{enumerate}
\end{thm}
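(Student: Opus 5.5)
The strategy is to factor the division polynomial into a product over torsion points and then pass to the limit factor by factor. Because $P \notin \hat{E}(K)$, its reduction $\tilde P$ is a nonzero point of $\tilde E(k)$, where $k$ is the residue field. We write $\tilde P$ as the sum of a prime-to-$p$ component of exact order $r'$ and a $p$-power component. For $n$ prime to $p$, or more generally after a finite extension of $K$, we use
\begin{align*}
F_n(P)^2 = n^2 \prod_{T \in E[n]\setminus\{O\}} \bigl(x(P)-x(T)\bigr)
\end{align*}
up to the normalization fixed by the chosen $t$. We then organize $F_{mq^k}(P)$ through the multiplicativity relation $F_{ab} = F_a^{b^2}\,(F_b\circ[a])$, up to the normalizing constants from the characterization of $F_n$. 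This gives
\begin{align*}
F_{mq^k}(P) = F_m(P)^{q^{2k}}\, F_{q^k}(mP)
\end{align*}
or a similar relation in the reversed order. The problem therefore reduces to two pieces: the $p$-adic behavior of $F_{q^k}(Q)$ for $Q \notin \hat{E}$, and the behavior of a fixed unit raised to the power $q^{2k}$.

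The key steps, in order, are as follows. (a) Choose $N$ so that $q = p^N$ satisfies two conditions: $q \equiv 1 \pmod{r'}$ up to the action of Frobenius, and $q$ kills the $p$-part of $\tilde E(k)$. In the ordinary case we may take $q$ to be a power of $\#k$ times the exponent of $\tilde E(k)[p^\infty]$. Then $q^k\tilde P$ is constant and equal to the prime-to-$p$ component $\tilde P'$. (b) Show that $F_m(P)^{q^{2k}}$ converges to $\omega\bigl(F_m(P)\bigr)^{\epsilon}$, where $\omega$ is the Teichm\"uller character, whenever $F_m(P)$ is a unit. $F_m(P)$ is a unit exactly when $m\tilde P \neq O$, that is, when $r \nmid m$. (c) Analyze $F_{q^k}(Q)$ through the kernel of reduction. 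In the ordinary case $E[q^k]$ is an extension of the \'etale quotient by the connected part $\hat{E}[q^k]$. The factors $x(Q)-x(T)$ with $T \in \hat{E}[q^k]$ have $x(T)$ of negative valuation. Together with the leading coefficient they contribute a unit tending to a Teichm\"uller-type limit. The factors with $T$ in the \'etale part reduce to $\tilde x(\tilde Q)-\tilde x(\tilde T)$, and grouping them by their reductions expresses the product as a norm. Its $q$-power iterates converge because Frobenius on $\hat{E}$ acts through a unit root $u$ with $u^{q^k}$ convergent. (d) Combine these pieces. This shows that $\lim_k F_{mq^k}(P)$ exists in $\mathcal{O}_K$ and is the product of Teichm\"uller lifts of reductions of $F_m$-values and of $\psi$-values of the canonical lift of $\tilde P'$. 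That gives part (1).

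For part (2), the limit is expressed through the canonical (Teichm\"uller) lift of the torsion point $\tilde P'$. This lift is the unique point $P' \in E[r'](\overline K)$ lifting $\tilde P'$. Using $P \equiv P' + (\text{formal-group part})$ and continuity, the limit equals a product of values $F_j(P')$ and roots of unity. Both are algebraic over $\Q(g_2,g_3)$, because torsion points of $E$ are algebraic over the field of the coefficients. For part (3), write $m = r' m_1$. Then $mP'=O$, so the limit involves $F_{r'}(P')=0$, which gives the forward direction. Conversely, if $r' \nmid m$, every factor in the limit is a unit, because $mq^k\tilde P = m\tilde P' \neq O$. The limit then has valuation $0$.

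The main obstacle is step (c): proving genuine convergence, not just boundedness, of the factors coming from $\hat{E}[q^k]$, and tracking the normalization constant $q^{k}$-type leading terms. This step is where ordinarity enters. I would first try writing $F_{q^k}(Q)=\prod$ over the kernel of $[q^k]$ using the isogeny factorization $[q] = \hat V\circ \mathrm{Fr}$ through the canonical lift. Here $\mathrm{Fr}$ is the canonical Frobenius lift with connected kernel, and $V$ is \'etale. Iterating this factorization reduces the problem to a contraction estimate for the action of $\mathrm{Fr}$ on residue disks.
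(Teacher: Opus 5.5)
Your opening reduction is sound. The identity $F_{ab}=F_a^{b^2}\cdot(F_b\circ[a])$ holds exactly (compare divisors and leading terms at $O$), so $F_{mq^k}(P)=F_m(P)^{q^{2k}}F_{q^k}(mP)$. After that, however, all of part (1) rests on the convergence of $F_{q^k}(Q)$ for $Q\notin\hat E(K)$, and, as you acknowledge, step (c) does not establish it.

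The product $q^{2k}\prod_{T\in E[q^k]\setminus\{O\}}(x(Q)-x(T))$ is the wrong tool. Its index set changes with $k$, and nothing in your sketch relates level $k+1$ to level $k$. The convergence of $u^{q^k}$ for the unit root $u$ has no bearing on either sub-product. Convergence of the formal-group factors would need something like Coleman's norm operator. The factorization $[q]=\hat V\circ\mathrm{Fr}$ passes through $E/\hat E[q]$, which is in general a different curve, so it cannot simply be iterated on $E$.

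Parts (2) and (3) are then read off from a limit formula that is never derived, and that formula is not correct as stated. Already for $m=1$ and $p\nmid r$, the limit $L$ satisfies $L^{q^2-1}F_q(T)=1$; in the paper's notation it is the principal unit $\sigma^{\rm alg}(\widetilde T)/\omega(\sigma^{\rm alg}(\widetilde T))$. This is in general a radical of $F$-values, not a product of $F$-values and roots of unity.

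Finally, $q\equiv 1\pmod{r'}$ in (a) is too weak. Besides killing the $p$-part, you should take $q\equiv 1\pmod{2r'h}$ with $h=\#(\mathcal O_K/\mathfrak m_K)^\times$ (compare the paper's $q\equiv 1\pmod{2r^2h}$). This makes $q$ a power of $\#(\mathcal O_K/\mathfrak m_K)$, and by Silverman's Corollary~9 on the period of $(F_n\bmod\mathfrak m_K)_n$ it gives $F_q(\tilde Q)=1$. Without this, the Teichm\"uller components of $F_{q^k}(Q)$ need not converge.

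What is missing is a device that routes the $k$-dependence through a fixed function. The paper cites this theorem rather than reproving it; Silverman's proof uses the Mazur--Tate sigma function, which is where ordinarity enters. The paper's Theorem \ref{main1} and Corollary \ref{cor alg} recover the result for $p\ge 5$ using Mumford's algebraic sigma series. The key identity is Lemma \ref{lem t-exp of F}:
\[(\tau_T^*F_{mq^k})(t)=\widehat{\sigma_{m\widetilde T}}\bigl([mq^k]^*(t)\bigr)\big/\widehat{\sigma_{\widetilde T}}(t)^{m^2q^{2k}}.\]
At $t=t_Q$ the numerator is a fixed convergent series evaluated at $[mq^k](t_Q)\to 0$. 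The denominator equals $s_{k_0}(t_Q)^{m^2q^{2(k-k_0)}}$ for a fixed unit $s_{k_0}(t_Q)$, congruent to $\sigma^{\rm alg}(\widetilde T)$ modulo the maximal ideal (Lemma \ref{keylem}). It therefore tends to $\omega(\sigma^{\rm alg}(\widetilde T))^{m^2}$.

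You can also close the gap inside your own framework. Take $Q$ whose reduction has prime-to-$p$ order (e.g.\ $Q=mqP$), and let $T$ now be the Teichm\"uller lift of $\tilde Q$. Your identity with $a=q^k$, $b=q$ gives $F_{q^{k+1}}(Q)=F_{q^k}(Q)^{q^2}c_k$, where $c_k=F_q(q^kQ)$. Since $q^kQ\to T$ and $F_q$ is a polynomial, $c_k\to F_q(T)$, and the periodicity above gives $c_k\in 1+\mathfrak m_K$. Hence
\[F_{q^k}(Q)=\prod_{j=0}^{k-1}c_{k-1-j}^{\,q^{2j}},\]
which converges because $\sup_{u\in 1+\mathfrak m_K}|u^{q^{2j}}-1|\to 0$. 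The limit satisfies $L=L^{q^2}F_q(T)$, which gives algebraicity; the paper uses this relation in Section~4 for exactly that purpose. Neither route uses the connected--\'etale structure of $E[q^k]$, so ordinarity is not where the real difficulty lies.
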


\begin{rmk}
(1) Under the change of variables $X=x$ and $Y=y/2$, equation \eqref{WeiEq} becomes 
\begin{align*}
Y^2=X^3-\frac{g_2}{4}X-\frac{g_3}{4}
\end{align*}
and Silverman's local parameter $-X/Y$ agrees with our parameter $t=-2x/y$. Hence our normalized division polynomials agree with those of \cite{Si05a} under this isomorphism.

(2) Silverman also gave an explicit choice of the power $q$ in terms of the period of the sequence $(F_n(P) \bmod \mathfrak{m}_K)_{n \geq 1}$.

(3) If $P$ lies in the formal group $\hat{E}(K)$ of $E$, \cite[Theorem 6.1]{St16} implies that the $p$-adic valuations of $F_n(P)$ are not bounded below as $n$ ranges over positive integers. 
Equivalently, the sequence $(F_n(P))_{n \geq 1}$ is unbounded in $K$ for the $p$-adic norm, and hence it does not converge in $K$.
\end{rmk}

The ordinary reduction hypothesis in Theorem~\ref{Silverman thm} comes from the
use of the Mazur--Tate $p$-adic sigma function. 
However, as Silverman explained in his addendum \cite{Si05b}, Ayad's
congruence results for elliptic divisibility sequences imply the $p$-adic
convergence of the corresponding subsequences for all but finitely many
primes.
Such congruence methods, however, yield convergence alone and do not identify the limits.
 The point of the present paper is different. Under the assumption of good reduction, we identify the limiting values explicitly and uniformly in the ordinary and supersingular cases, in terms of an algebraic sigma function arising from Mumford's theory of algebraic theta functions.

We define $V(E)$ to be the set of sequences $\widetilde{P}=(P_n)_{n \geq 1}$ in $E(\overline{K})_{\rm tor}$ satisfying $mP_{mn}=P_n$ for any positive integers $m, n$. 
The set $V(E)$ is an arithmetic analogue of the universal covering space of a complex elliptic curve, and the algebraic sigma function $\sigma^{\rm alg}$ is defined on $V(E)$. (See Definition \ref{def algsigma} and Theorem \ref{an desc} for its analytic description.) For $a \in \Z$ and $\widetilde{P}=(P_n)_{n \geq 1} \in V(E)$, we write $a \widetilde{P}=(aP_n)_{n \geq 1}$.

\begin{thm}\label{main1}
Let $p \geq 5$ be a prime, let $K$ be a finite extension of $\mathbb{Q}_p$,
and let $E$ be an elliptic curve over $K$ with good reduction, given by a
smooth Weierstrass equation \eqref{WeiEq}, that is, with
$g_2, g_3 \in \mathcal{O}_K$ and unit discriminant.
Let $P \in E(K) \setminus \hat{E}(K)$ and let $r \geq 2$ be the order of the
reduction of $P$ modulo the maximal ideal of $K$. Write 
\begin{align*}
r=p^jr', \qquad p \nmid r'
\end{align*}
and set $P'=p^jP$. Let $T \in E(K)[r']$ be the Teichm\"{u}ller lift of the reduction of $P'$, that is, the unique prime-to-$p$ torsion point reducing to $P'$ modulo the maximal ideal. 
Let $\widetilde T=(T_n)_{n \geq 1} \in V(E)$ with $T_1=T$. 
Then there exists a power $q=p^N$ such that for every positive integer $m$, the limit 
\begin{align*}
\lim_{k\to \infty} F_{mq^k} (P)
\end{align*}
exists in $\mathcal{O}_K$ and is given by
\begin{align*}
\lim_{k\to \infty} F_{mq^k} (P)=
\begin{cases}
\displaystyle{\frac{\sigma^{\rm alg}(m\widetilde{T})}{\omega(\sigma^{\rm alg}(\widetilde{T}))^{m^2}}}, 
& \text{if $r$ is prime to $p$}, \\
\displaystyle{0}, & \text{if $r$ is a power of $p$}, \\
\displaystyle{\sigma^{\rm alg}(m'\widetilde{T})\omega\left(\frac{F_{p^j}(P)}{\sigma^{\rm alg}(\widetilde{T})}\right)^{m'^2}}, & \text{otherwise}.
\end{cases}
\end{align*}
Here, $\omega=\omega_L : \mathcal{O}_L^{\times} \rightarrow \mu(L)$ denotes the Teichm\"{u}ller character, where $L=K(E[4r'^2])$, and in the third case, 
\begin{align*}
m'=\frac{mq^j}{p^j}=mp^{(N-1)j}.
\end{align*}
%
%
%
%
\end{thm}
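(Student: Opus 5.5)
We use the algebraic sigma function through the relation between division polynomials and $\sigma^{\rm alg}$. The expected key identity is the algebraic analogue of $F_n(z)=\sigma(nz)/\sigma(z)^{n^2}$:
\begin{align*}
F_n(P_1)=\frac{\sigma^{\rm alg}(n\widetilde P)}{\sigma^{\rm alg}(\widetilde P)^{n^2}},
\end{align*}
valid for $\widetilde P=(P_n)\in V(E)$ with $P_1$ not a pole of $F_n$. We would derive it from Definition~\ref{def algsigma}, or from the analytic description in Theorem~\ref{an desc} by comparing divisors and normalizations. Together with the quasi-periodicity of $\sigma^{\rm alg}$ under translation by torsion sequences, it reduces everything to controlling $\sigma^{\rm alg}$ on torsion sequences lying over prime-to-$p$ torsion points.

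\emph{Case $r$ prime to $p$.} Here $P=T+R$ with $R\in\hat E(K)$ and $T$ the Teichm\"uller lift, $rT=O$. Choose $q=p^N$ with $q\equiv1 \pmod{r'}$, and large enough that $q$ kills the Teichm\"uller part of the relevant units in $L=K(E[4r'^2])$, i.e.\ $u^{q^k}\to\omega(u)$ for $u\in\mathcal{O}_L^\times$. Then $mq^kT=mT$.

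First, write $F_{mq^k}(P)$ through the addition law: $F_{mq^k}(T+R)$ differs from $F_{mq^k}(T)$ by a factor built from the formal group. Since $q^kR\to O$ in $\hat E$, this factor tends to $1$; this step needs an estimate on $F_n$ near the formal group. Second, use the key identity for the torsion point:
\begin{align*}
F_{mq^k}(T)=\frac{\sigma^{\rm alg}(mq^k\widetilde T)}{\sigma^{\rm alg}(\widetilde T)^{m^2q^{2k}}}.
\end{align*}
Third, use the quasi-periodicity $\sigma^{\rm alg}(\widetilde{A}+\widetilde{B})=\chi\cdot\sigma^{\rm alg}(\widetilde A)$ for $\widetilde B$ with $B_1=O$, where $\chi$ is a root of unity in $L$ (hence the $4r'^2$). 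This gives $\sigma^{\rm alg}(mq^k\widetilde T)=\sigma^{\rm alg}(m\widetilde T)$, up to roots of unity that become trivial along $q^k$. Finally, $\sigma^{\rm alg}(\widetilde T)$ is a unit in $\mathcal O_L$ by good reduction, so $\sigma^{\rm alg}(\widetilde T)^{m^2q^{2k}}\to\omega(\sigma^{\rm alg}(\widetilde T))^{m^2}$. Combining these gives the first formula.

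\emph{Case $r=p^j$.} The reduction of $P$ has order $p^j$. For $k\ge j$ and $N\ge1$, $\bar P$ is killed by $mq^k$, so $F_{mq^k}(P)\in\mathfrak m_K$. We would show that its valuation grows, using the divisibility properties $F_{ab}=F_a(bP)\cdot F_b^{a^2}$-type relations: $F_{mq^k}(P)=F_{mq^k/p^j}(p^jP)\,F_{p^j}(P)^{(mq^k/p^j)^2}$. Here $F_{p^j}(P)\in\mathfrak m_K$ is raised to an exponent tending to infinity, while the first factor is integral.

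\emph{General case.} Write $F_{mq^k}(P)=F_{m'q^{k'}}(P')\,F_{p^j}(P)^{(mq^k/p^j)^2}$ with $k'=k-j$ and $m'=mq^j/p^j$, so that $mq^k/p^j=m'q^{k-j}$. The point $P'$ has reduction of order $r'$, prime to $p$. By the first case,
\begin{align*}
F_{m'q^{k-j}}(P')\to\frac{\sigma^{\rm alg}(m'\widetilde T)}{\omega(\sigma^{\rm alg}(\widetilde T))^{m'^2}}.
\end{align*}
Since $F_{p^j}(P)$ is a unit here ($r'\nmid p^j$, so $p^jP\notin\hat E$), we get $F_{p^j}(P)^{m'^2q^{2(k-j)}}\to\omega(F_{p^j}(P))^{m'^2}$. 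Multiplying the two limits gives the stated third formula.

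The main obstacle is the first-case estimate showing that replacing $T$ by $T+R$, with $R\in\hat E(K)$, does not change the limit. We would handle it via the translation formula
\begin{align*}
\frac{F_n(T+R)}{F_n(T)}=\frac{\sigma^{\rm alg}(n\widetilde{T}+n\widetilde R)\,\sigma^{\rm alg}(\widetilde T)^{n^2}}{\sigma^{\rm alg}(n\widetilde T)\,\sigma^{\rm alg}(\widetilde T+\widetilde R)^{n^2}}.
\end{align*}
This would be combined with $p$-adic analyticity of $\sigma^{\rm alg}$ in the formal-group direction, which must hold even in the supersingular case. The key point to check is that the exponent $n^2=m^2q^{2k}$ is compensated, with the bound uniform in $k$.
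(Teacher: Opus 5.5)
The overall architecture of your proposal agrees with the paper: reduce to $p\nmid r$, and handle the third case via $F_{ab}=(F_a\circ[b])\,F_b^{a^2}$ with $F_{p^j}(P)$ a unit. Your torsion identity $F_n(T)=\sigma^{\rm alg}(n\widetilde T)/\sigma^{\rm alg}(\widetilde T)^{n^2}$ is also correct. It follows from Theorem~\ref{an desc} and $F_n(\Phi(z))=\sigma(nz)/\sigma(z)^{n^2}$, after matching $\widetilde T$ at level $4r$ with an element of $\Lambda\otimes\mathbb{Q}$ as in the proof of Lemma~\ref{lem t-exp of F}.

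The genuine gap is the step you flag yourself: passing from $P=T+R$ to $T$. As proposed it cannot be carried out. $\sigma^{\rm alg}$ is defined only on $V(E)$, i.e.\ on sequences of torsion points, whereas $R\in\hat E(K)$ is in general not torsion, so $\sigma^{\rm alg}(\widetilde T+\widetilde R)$ has no meaning. Moreover, ``$q^kR\to O$'' only addresses the factor involving $nR$. It says nothing about $\bigl(\sigma^{\rm alg}(\widetilde T)/\sigma^{\rm alg}(\widetilde T+\widetilde R)\bigr)^{n^2}$, which involves $R$ itself and is raised to a huge power.

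What is missing is the paper's algebraic sigma series $\widehat{\sigma_{\widetilde T}}(t)$ (Definition~\ref{def ASS}), a power series with constant term $\sigma^{\rm alg}(\widetilde T)$. One also needs the formal identity $(\tau_T^\ast F_{mq^k})(t)=\widehat{\sigma_{m\widetilde T}}([mq^k]^\ast t)/\widehat{\sigma_{\widetilde T}}(t)^{m^2q^{2k}}$ for $q\equiv 1 \bmod 2r^2$. Even then you cannot substitute $t=t_R$ directly: $\sigma(\lambda(t))$ is only known to have some positive radius of convergence, which need not reach $|t_R|$. In the supersingular case there is no integral Mazur--Tate sigma function to fall back on.

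The paper's device is the identity $\widehat{\sigma_{\widetilde T}}(t)^{q^{2k_0}}=\widehat{\sigma_{\widetilde T}}([q^{k_0}]^\ast t)\cdot(\tau_T^\ast F_{q^{k_0}})^{-1}$. For $k_0$ large, the right-hand side converges at $t_R$ and is congruent to $\sigma^{\rm alg}(\widetilde T)$ modulo the maximal ideal (by Lemma~\ref{keylem} and $F_{q^{k_0}}(P)\equiv 1$). Hence its $q^{2(k-k_0)}$-th powers tend to $\omega(\sigma^{\rm alg}(\widetilde T))$. So the `compensation of $n^2$' is a Teichm\"uller limit, not a uniform estimate.

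Your strategy can also be completed without any continuation of $\sigma^{\rm alg}$. For $r\nmid m$, put $u_k=F_{mq^k}(P)/F_{mq^k}(T)$, which are units. The relation $F_{ab}=(F_a\circ[b])F_b^{a^2}$ and $q^kT=T$ give $u_{k+1}=\frac{F_q(mq^kP)}{F_q(mT)}\,u_k^{q^2}$. Here $u_0\equiv 1$, and the prefactor is a principal unit tending to $1$ because $mq^kP\to mT$. Since $|u^{q^2}-1|\le\rho\,|u-1|$ for principal units $u$ of $K$ with a fixed $\rho<1$, this forces $u_k\to 1$. Your torsion identity then computes $\lim F_{mq^k}(T)$. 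In either version, the case $r\mid m$ needs separate treatment, since then $F_{mq^k}(T)=0$ and your ratio is undefined.

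Your argument for $r=p^j$ is wrong as written. The factor $F_{mq^k/p^j}(p^jP)$ is in general far from integral: $p^jP\in\hat E(K)$, and for $Q\in\hat E(K)$ one has $v(F_n(Q))=-n^2v(t(Q))+v(t(nQ))$, which tends to $-\infty$. The conclusion survives only through an exact cancellation. For $P\notin\hat E(K)$ with $nP\in\hat E(K)$, one has $v(F_n(P))=v(t(nP))$; this comes from $x(nP)=x(P)-F_{n+1}F_{n-1}(P)/F_n(P)^2$, where $F_{n\pm1}(P)$ are units. Hence $v(F_{mq^k}(P))=v(t(mq^kP))\to\infty$; the paper simply invokes Stange's Theorem~6.1 here. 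The same identity disposes of the case $r\mid m$ above.

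Finally, the unit property of $\sigma^{\rm alg}(\widetilde T)$ is not automatic `by good reduction'; the paper proves it in Lemma~\ref{keylem}(1) via the integral model. In your framework it follows at once from your torsion identity with $n=r+1$. Quasi-periodicity gives $\sigma^{\rm alg}((r+1)\widetilde T)=\pm\sigma^{\rm alg}(\widetilde T)$, so $F_{r+1}(T)=\pm\sigma^{\rm alg}(\widetilde T)^{-r^2-2r}$. And $F_{r+1}(T)$ is a unit because $(r+1)\bar T=\bar T\neq\bar O$.
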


The limit does not depend on the choice of $\widetilde{T}$. (See Remark \ref{depends only on T}.) 
Moreover, the formula holds for any positive integer $N$ satisfying the following two conditions:
\begin{itemize}
\item $p^N \equiv 1 \bmod 2r'^2h$, where $h$ is the order of the multiplicative group of the residue field of $K$,
\item the residue degree of the extension $K(E[4r'^2])/\mathbb{Q}_p$ divides $N$.
\end{itemize}
(Indeed, the proof of Theorem \ref{main1'} uses only these two properties of $N$.)

The construction of algebraic sigma functions also gives analogues of
properties (2) and (3) in Theorem~\ref{Silverman thm} as follows.

\begin{cor}\label{cor alg}
Under the hypotheses and notation of Theorem \ref{main1}, the limit 
\begin{align*}
\lim_{k\to \infty} F_{mq^k} (P)
\end{align*}
is algebraic over the field generated by the coefficients of the chosen Weierstrass equation. 
Moreover, the limit is zero if and only if $r' \mid m$.
\end{cor}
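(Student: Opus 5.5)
The corollary will be read off from the explicit formula of Theorem~\ref{main1}, using two properties of $\sigma^{\rm alg}$ that we expect from its construction (Definition~\ref{def algsigma}). First, for $\widetilde{T}=(T_n)\in V(E)$ whose components $T_n$ all lie in $E(\overline{K})_{\rm tor}$, the value $\sigma^{\rm alg}(\widetilde{T})$ is algebraic over $K_0=\Q(g_2,g_3)$. It is built from Mumford's algebraic theta functions evaluated at torsion points, so it is given by rational functions with coefficients in $K_0$ evaluated at points of $E[n]$ for finitely many $n$; in particular it lies in $K_0(E[M])$ for a suitable $M$. Second, $\sigma^{\rm alg}(\widetilde{T})=0$ if and only if $T_1=O$, because the sigma function vanishes exactly on the lattice.

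The plan has four steps.

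\begin{enumerate}
\item \textbf{Ingredients.} The factors $F_{p^j}(P)$ and $\sigma^{\rm alg}(\widetilde{T})$ are units in $\mathcal{O}_L$; this is needed for the formula to make sense and is already part of Theorem~\ref{main1}. Hence the Teichm\"uller values $\omega(\cdot)$ appearing in the formula are roots of unity, and roots of unity are algebraic over any field.
\item \textbf{Algebraicity.} Every remaining factor has the form $\sigma^{\rm alg}(m\widetilde{T})$ or $\sigma^{\rm alg}(m'\widetilde{T})$. Since $T$ is torsion, $m\widetilde{T}$ has torsion components, so these values are algebraic over $K_0$ by the first property. In the case where $r$ is a power of $p$ the limit is $0$, which is trivially algebraic. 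So in all three cases the limit is a product of algebraic numbers.
\item \textbf{The zero criterion.} In the first and third cases the $\omega$-factors never vanish, so the limit is zero exactly when $\sigma^{\rm alg}(m\widetilde{T})=0$, respectively $\sigma^{\rm alg}(m'\widetilde{T})=0$. By the second property this happens exactly when $mT=O$, respectively $m'T=O$.
\item \textbf{Translating to $r'\mid m$.} Since $T$ has exact order $r'$ and $m'=mp^{(N-1)j}$ with $p\nmid r'$, we get $m'T=O\iff r'\mid m$. In the first case $r=r'$, so $mT=O\iff r'\mid m$ directly. In the second case $r'=1$, so $r'\mid m$ always holds, consistent with the limit being $0$.
\end{enumerate}

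The main obstacle is justifying the two properties of $\sigma^{\rm alg}$, especially its exact zero locus. For the zero locus I would first use the analytic description (Theorem~\ref{an desc}): over $\C$ the algebraic sigma function should differ from Weierstrass' $\sigma$ by a nonvanishing exponential factor evaluated at a lift $z$ of $T$. Weierstrass' $\sigma$ vanishes exactly on the lattice, so $\sigma^{\rm alg}(\widetilde{T})=0$ exactly when $T=O$. Transferring this from $\C$ to $\overline{K}$ by an embedding of the algebraic values, i.e.\ by invariance under field isomorphisms, should then give both the algebraicity and the zero criterion.
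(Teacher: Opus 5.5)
Your proposal is correct and is essentially the paper's argument. You read the corollary off the explicit formula of Theorem~\ref{main1}: algebraicity comes from Proposition~\ref{definition field} (equivalently Theorem~\ref{an desc}) together with the $\omega$-factors being roots of unity, and the zero criterion comes from $\sigma^{\rm alg}(\tilde{P})=0 \iff P_1=O$, combined with $T$ having exact order $r'$ and $p \nmid r'$. The one difference is that you justify the zero locus by transferring to $\mathbb{C}$ via Theorem~\ref{an desc}, whereas Remark~\ref{rmk algsigma}(3) already gives it algebraically from the order of $f_{\tilde{P}}$ at $O$, so no embedding argument is needed.
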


The mechanism behind the formula may be described as follows. 
After separating the $p$-primary part of the reduction order, the relevant point decomposes into a formal group component and a prime-to-$p$ torsion point $T$, the Teichm\"uller lift of its reduction. 
Repeated multiplication by $q$ sends the formal group component to the origin while fixing $T$. 
Mumford's algebraic sigma function records the surviving torsion point $T$,
while the exponent $n^2$ in the normalization of $F_n$ gives rise to the
Teichm\"uller character $\omega$.

The paper is organized as follows. In Section 2, we review Mumford's theory of algebraic sigma functions for elliptic curves and relate it to certain holomorphic functions on $\C$.
In Section 3, we study the $p$-adic properties of algebraic sigma functions and prove Theorem \ref{main1} by relating them to division polynomials. 
In Section 4, we apply Theorem \ref{main1} to obtain explicit $p$-adic limit formulas for nonsingular elliptic divisibility sequences and explain their relation to Silverman's conjecture.

\vspace{10pt}
\noindent
\textsc{Notation:}~ Let $K$ be a field of characteristic zero and $E$ an
elliptic curve over $K$ given by a Weierstrass equation $(\ref{WeiEq})$.
We write $\hat{E}$ for the formal group of $E$ with respect to the local
parameter $t=-2x/y$, and we denote its formal logarithm by $\lambda(t)$.
When $K=\mathbb{C}$, we may write $E(\mathbb{C})=\mathbb{C}/\Lambda$ for a
lattice $\Lambda$ and regard a rational function on $E$ as an elliptic
function of the variable $z \in \mathbb{C}$; its Laurent expansion at
$z=0$ then turns into its formal Laurent expansion at $O$ in the parameter
$t$ under the substitution $z=\lambda(t)$.
In the same way, for a meromorphic function $f(z)$ on $\mathbb{C}$, we
write $\hat{f}(t)$ for the composition of its Laurent expansion at the
origin with $z=\lambda(t)$; likewise, for a formal Laurent series
$f(z) \in K((z))$, we put
\begin{align*}
\hat{f}(t) \coloneqq f(\lambda(t)) \in K((t)).
\end{align*}%

\vspace{10pt}
\noindent
\textsc{Acknowledgments:}~ The authors are grateful to Takao Yamazaki and Kazuto Ota for many helpful discussions and comments. The authors also thank Satoshi Kumabe for his careful reading of the manuscript.
The first author was supported by JSPS Grant-in-Aid for Transformative Research Areas (A) (22H05107). The second author was supported by JSPS KAKENHI Grant Number 22H00096.

\section{Mumford's algebraic theta functions}

Mumford introduced and studied algebraic theta functions on abelian varieties.
We review Mumford's theory for elliptic curves. This section follows \cite{BK10}; see also \cite{Mu91}.

In this section, we let $K$ be a field of characteristic $0$ and fix an algebraic closure $\overline{K}$ of $K$. Let $E$ be an elliptic curve over $K$.

\subsection{Definition}
For a prime $\ell$, let $T_\ell (E)$ be the $\ell$-adic Tate module of $E$. We define
\begin{align*}
V(E) &\coloneqq \mathbb{Q} \otimes_{\mathbb{Z}} \prod_{\ell} T_{\ell}(E) 
=\left\{(P_n)_n \in \prod_{n \geq 1} E(\overline{K})_{\rm tor} \ \middle| \ mP_{mn}=P_n \text{ for all } m, n \geq 1 \right\},
\end{align*}
where the product runs over all rational primes $\ell$.

\begin{lem}\label{principal lem}
Let $n$ be a positive integer and $P \in E(\overline{K})[n]$. Then the divisor 
\begin{align*}
[n]^{\ast}\left(\tau_P^{\ast} (O)-(O)\right)=\sum_{nQ=-P}(Q)-\sum_{nR=O}(R)
\end{align*}
is principal.
\end{lem}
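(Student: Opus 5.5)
We apply Abel's theorem: a divisor on $E$ over $\overline{K}$ is principal if and only if it has degree $0$ and its points sum to $O$ in the group law. So the plan is to first confirm the displayed description of the pullback divisor, and then check the degree and the sum.

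\emph{The pullback.} Here $\tau_P$ is translation by $P$, so $\tau_P^{\ast}(O)$ is the divisor $(-P)$. The isogeny $[n]$ is separable because $\operatorname{char} K=0$, so it is unramified of degree $n^2$. Hence $[n]^{\ast}(-P)=\sum_{nQ=-P}(Q)$ and $[n]^{\ast}(O)=\sum_{nR=O}(R)$, each sum having exactly $n^2$ terms with multiplicity one. This confirms the displayed identity, and it shows at once that the divisor has degree $n^2-n^2=0$.

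\emph{The sum.} The kernel $E[n]$ is isomorphic to $(\Z/n\Z)^2$. Its sum of elements is $O$: the elements pair off as $R$ and $-R$, and the $2$-torsion points that are fixed by negation contribute $O$ when $n$ is odd, or $O+T_1+T_2+T_3=O$ when $n$ is even. Now fix one $Q_0$ with $nQ_0=-P$. The fibre over $-P$ is the coset $Q_0+E[n]$. Its points therefore sum to
\begin{align*}
n^2Q_0+\sum_{R\in E[n]}R = n\cdot(nQ_0) = -nP = O,
\end{align*}
where the last equality uses the hypothesis $P\in E[n]$. So the divisor sums to $O-O=O$, and Abel's theorem shows it is principal.

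\emph{Field of definition.} The divisor is Galois-stable over $K(P)$, so the rational function realising it can be chosen over $K(P)$ by Hilbert 90. This refinement is not needed for the statement as posed.

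Every step is routine; the only point that needs care is the computation of the sum of the $n$-torsion points, which is handled by the pairing $R\leftrightarrow -R$ above.
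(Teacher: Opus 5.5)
Your proof is correct and takes the same route as the paper: Abel's theorem, with the fibre over $-P$ written as the coset $Q_0+E[n]$, so that the divisor sums to $n^2Q_0=n(nQ_0)=-nP=O$. Your separate check that the points of $E[n]$ sum to $O$ is harmless but unnecessary, because $\sum_{R\in E[n]}R$ occurs in both halves of the divisor and cancels, and the paper uses exactly that cancellation.
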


\begin{proof}
Take a point $Q_0 \in E(\overline{K})$ such that $nQ_0=-P$. As a sum of points of $E$, we have
\begin{align*}
\sum_{nQ=-P}Q-\sum_{nR=O}R=\sum_{nR=O}(Q_0+R)-\sum_{nR=O}R=n^2Q_0=O.
\end{align*}
Thus the lemma follows from Abel's theorem.
\end{proof}

Let $\tilde{P}=(P_n)_{n \geq 1} \in V(E)$ and write $Q_n \coloneqq P_{2n}$ for $n \geq 1$. Let $N$ be the order of $P_1$. By Lemma \ref{principal lem}, we can take a rational function $g \in \overline{K}(E)$ whose divisor is ${\rm div}(g)=[2N]^{\ast}(\tau_{Q_1}^{\ast} (O)-(O))$. We put
\begin{align*}
f_{\tilde{P}} \coloneqq \tau_{Q_{2N}}^{\ast}(g/[-1]^{\ast}g) \in \overline{K}(E).
\end{align*}
Note that $f_{\tilde{P}}$ does not depend on the choice of $g$. We also see that ${\rm div}(f_{\tilde{P}})=[2N]^{\ast}(\tau_{P_1}^{\ast} (O)-(O))$.

\begin{rmk}\label{f as ell func}
We consider the case $K=\mathbb{C}$. We take a lattice $\Lambda$ in $\mathbb{C}$ such that $\mathbb{C}/\Lambda \simeq E(\mathbb{C})$. If we regard rational functions $f_{\tilde{P}}, g \in \mathbb{C}(E)$ as elliptic functions with respect to the period lattice $\Lambda$ via this isomorphism, then we have
\begin{align*}
f_{\tilde{P}}(z)=\frac{g(z+w_{2N})}{g(-z-w_{2N})},
\end{align*}
where $w_{2N} \in \mathbb{C}$ corresponds to the point $Q_{2N} \in E(\mathbb{C})$.
\end{rmk}

\begin{dfn}\label{def algsigma}
Let $t$ be a local parameter at $O$. For $\tilde{P}=(P_n)_{n \geq 1} \in V(E)$ with $P_1 \in E_{\rm tor}$ of order $N$, we define 
\begin{align*}
\sigma^{\rm alg}(\tilde{P}) \coloneqq \left(f_{\tilde{P}} \cdot [2N]^{\ast}(t)\right)(O).
\end{align*}
This gives the {\it algebraic sigma function} $\sigma^{\rm alg} : V(E) \rightarrow \overline{K}$ for $E$.
\end{dfn}

\begin{rmk}\label{rmk algsigma}
(1) The value $\sigma^{\rm alg}(\tilde{P})$ depends on the choice of a local parameter $t$. In the following, we use the local parameter $t=-2x/y$ with the Weierstrass equation \eqref{WeiEq}.

(2) Although the algebraic sigma function is defined on $V(E)$, the value $\sigma^{\rm alg}(\tilde{P})$ for $\tilde{P}=(P_n)_{n \geq 1}$ is determined by the single point $P_{4N}$.

(3) The rational function $f_{\tilde{P}}$ has order $-1$ at $O$ if $P_1 \neq O$ and order 0 at $O$ if $P_1=O$. Hence $\sigma^{\rm alg}(\tilde{P}) \neq 0$ if and only if $P_1 \neq O$.
\end{rmk}


The following proposition follows from the construction of the algebraic sigma function.

\begin{prop}\label{definition field}
For $\tilde{P}=(P_n)_{n \geq 1} \in V(E)$ with $P_1 \in E_{\rm tor}$ of order $N$, we have $\sigma^{\rm alg}(\tilde{P}) \in K(E[4N^2])$.
\end{prop}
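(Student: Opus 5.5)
Since $\sigma^{\rm alg}(\tilde{P})$ is defined from the rational function $f_{\tilde{P}}$ and the local parameter $t=-2x/y$, which is $K$-rational, I will use Galois descent. Put $M=K(E[4N^2])$ and take $\tau\in{\rm Gal}(\overline{K}/M)$. It suffices to show $\tau(\sigma^{\rm alg}(\tilde{P}))=\sigma^{\rm alg}(\tilde{P})$. By Remark \ref{rmk algsigma}(2), $\sigma^{\rm alg}(\tilde{P})$ depends only on $P_{4N}$. We have $4N\cdot P_{4N}=NP_1=O$, so $P_{4N}\in E[4N]\subset E[4N^2]$. Also $Q_1=P_2$, $Q_{2N}=P_{4N}$ and $Q_1\in E[2N]$, all of which are fixed by $\tau$.

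First, I will show that the divisor ${\rm div}(g)=[2N]^{\ast}(\tau_{Q_1}^{\ast}(O)-(O))$ is $\tau$-stable. Its support consists of the points $Q$ with $2NQ=-Q_1$, together with $E[2N]$. Each such $Q$ satisfies $4N^2Q=-NQ_1=-NP_2$, and this is $O$ because $2NP_2=P_1$ and $NP_1=O$. So all points in the support lie in $E[4N^2]$ and are fixed by $\tau$. Hence $\tau(g)$, meaning $g$ with $\tau$ applied to its coefficients, has the same divisor as $g$, so $\tau(g)=cg$ for some constant $c\in\overline{K}^\times$.

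Second, I will observe that $f_{\tilde{P}}=\tau_{Q_{2N}}^\ast(g/[-1]^\ast g)$ does not depend on the scaling of $g$. Therefore
\[
\tau(f_{\tilde{P}})=\tau_{\tau(Q_{2N})}^\ast\bigl(\tau(g)/[-1]^\ast\tau(g)\bigr)=\tau_{Q_{2N}}^\ast\bigl(cg/(c[-1]^\ast g)\bigr)=f_{\tilde{P}},
\]
so $f_{\tilde{P}}\in M(E)$. Since $[2N]^\ast t$ also lies in $K(E)$, the product $f_{\tilde{P}}\cdot[2N]^\ast t$ lies in $M(E)$. This function has neither a zero nor a pole at $O$: the pole of order $1$ of $f_{\tilde{P}}$ (Remark \ref{rmk algsigma}(3)) is cancelled by the simple zero of $[2N]^\ast t$, and when $P_1=O$ both factors have order $0$ and $1$ and the value is $0$. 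Its value at the $K$-rational point $O$ therefore lies in $M$.

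The only real point to check is the torsion bookkeeping, namely that every point involved (the support of ${\rm div}(g)$ and $Q_{2N}$) is $4N^2$-torsion. Once that is in place, the rest is formal Galois invariance. Finally, since $\overline{K}^{{\rm Gal}(\overline{K}/M)}=M$ in characteristic $0$, we conclude $\sigma^{\rm alg}(\tilde{P})\in M$.
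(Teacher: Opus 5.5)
Your Galois-descent argument is correct, and it is essentially the paper's approach: the paper gives no proof beyond saying the proposition ``follows from the construction'', and your argument spells that out. Every point entering the construction of $f_{\tilde P}$ is $4N^2$-torsion, so $f_{\tilde P}\cdot[2N]^\ast(t)$ is defined over $K(E[4N^2])$, and its value at $O$ lies in $K(E[4N^2])$.

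Since you identify the torsion bookkeeping as the one real point, you should correct its slips, although your final conclusions are right:
\begin{itemize}
\item From $mP_{mn}=P_n$ one gets $4NP_{4N}=P_1$, not $NP_1$. Hence $4N^2P_{4N}=NP_1=O$, so $P_{4N}\in E[4N^2]$. In fact $P_{4N}\notin E[4N]$ whenever $N\geq 2$, so your claim $P_{4N}\in E[4N]$ is false in general.
\item For $2NQ=-Q_1$, one has $4N^2Q=-2NQ_1=-N(2P_2)=-NP_1=O$, using $2P_2=P_1$ rather than $2NP_2=P_1$. Your intermediate claim $NQ_1=NP_2=O$ is false in general; for example, it fails whenever $N$ is even.
\end{itemize}
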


\subsection{The analytic description of $\sigma^{\rm alg}$}

Let $K$ be a field of characteristic $0$, $E$ an elliptic curve over $K$, and $\omega_E$ an invariant differential form on $E$. Given an embedding $K \hookrightarrow \C$, there is a unique lattice $\Lambda \subset \C$ and a uniformization $\phi : \C/\Lambda \rightarrow E(\C)$ such that $\phi^{\ast}\omega_E=dz$; with this normalization, $E$ is represented by equation \eqref{WeiEq}.
We recall below several functions associated with the lattice  $\Lambda$. See {\cite[VI \S 3]{AEC}} and {\cite[I \S 5]{ATAEC}} for more detailed properties.

We define the {\it Weierstrass sigma function} (associated to $\Lambda$)
\begin{align}\label{Wsigma}
\sigma(z)=\sigma(z ; \Lambda) \coloneqq z \prod_{\gamma \in \Lambda \setminus \{0\}} \left(1-\frac{z}{\gamma}\right)\exp \left(\frac{z}{\gamma}+\frac{1}{2}\left(\frac{z}{\gamma}\right)^2\right).
\end{align}
It is known that the infinite product in (\ref{Wsigma}) is absolutely and uniformly convergent on compact subsets of $\C$ and defines a holomorphic function on $\mathbb{C}$ with simple zeros on $\Lambda$ and no other zeros. If $E$ is defined by the Weierstrass equation (\ref{WeiEq}), it is also known that the Taylor expansion of $\sigma(z)$ at $z=0$ is given by
\begin{align}\label{exp sigma}
\sigma(z)=\sum_{m, n=0}^\infty a_{m, n} \left(\frac{g_2}{2}\right)^m (2g_3)^n \frac{z^{4m+6n+1}}{(4m+6n+1)!},
\end{align}
where $a_{m, n}$ is defined by the recurrence relation
\begin{align*}
a_{m, n}=&3(m+1)a_{m+1, n-1} \\
&+\frac{16}{3}(n+1)a_{m-2, n+1}-\frac{1}{3}(2m+3n-1)(4m+6n-1)a_{m-1, n}
\end{align*}
with $a_{0, 0}=1$ and $a_{m, n}=0$ if $m$ or $n$ is negative \cite[pp. 635--636]{AS64}. 
We also define the Weierstrass zeta function (associated to $\Lambda$) $\zeta(z)\coloneqq \sigma'(z)/\sigma(z).$ For $\gamma \in \Lambda$ and $z \in \mathbb{C} \setminus \Lambda$, it is known that $\eta(\gamma) \coloneqq \zeta(z+\gamma)-\zeta(z)$ depends only on $\gamma$. This defines a group homomorphism $\eta : \Lambda \rightarrow \mathbb{C}$, which is called the quasi-period map associated to $\Lambda$. We extend the homomorphism $\eta : \Lambda \rightarrow \mathbb{C}$ to $\Lambda \otimes \mathbb{Q}$ linearly.
The Weierstrass sigma function satisfies the transformation formula
\begin{align}\label{transW}
\sigma(z+\gamma)=\psi(\gamma)\exp \left(\eta(\gamma)\left(z+\frac{1}{2}\gamma\right)\right)\sigma(z)
\end{align}
for $z \in \mathbb{C}$ and $\gamma \in \Lambda$, where $\psi$ is a function on $\Lambda$ with values in $\{ \pm 1\}$, and $\psi(\gamma)=1$ if and only if $\gamma \in 2\Lambda$.

\begin{dfn}\label{sigma_z}
For $z_1 \in \Lambda \otimes \mathbb{Q}$, we define
\begin{align*}
\sigma_{z_1}(z) \coloneqq \exp \left(-\eta(z_1)\left(z+\frac{1}{2}z_1\right)\right)\sigma(z+z_1).
\end{align*}
\end{dfn}

Let $z_1 \in \Lambda \otimes \mathbb{Q}$, and let $N$ be the order of its image in 
$\mathbb{C}/\Lambda$.
If we let $P_n \in E(\mathbb{C})$ be the point corresponding to $z_1/n \in \Lambda \otimes \mathbb{Q}$, then $\tilde{P} \coloneqq (P_n)_n \in V(E)$. Thus we have a natural inclusion $\Lambda \otimes \mathbb{Q} \hookrightarrow V(E)$. 
The following theorem gives an analytic description of $\sigma^{\rm alg}$ in terms of the holomorphic function $\sigma_{z_1}$.

\begin{thm}[cf. {\cite[Theorem 2.13]{BK10}}]\label{an desc}
Let $\sigma^{\rm alg}$ be the algebraic sigma function defined in Definition \ref{def algsigma} with respect to the local parameter $t=-2x/y$ at $O$.
Under the above inclusion, we have
\begin{align*}
\sigma^{\rm alg}(\tilde{P})=\sigma_{z_1}(0)=\exp \left(-\frac{1}{2}z_1\eta(z_1)\right)\sigma(z_1).
\end{align*}
In particular, we have $\sigma_{z_1}(0) \in \mathbb{Q}(g_2, g_3, E[4N^2])$. 
Furthermore, the Taylor coefficients of $\sigma_{z_1}(z)$ at $z=0$ are contained in $\mathbb{Q}(g_2, g_3, E[4N^2])$.
\end{thm}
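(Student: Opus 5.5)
We propose to prove Theorem \ref{an desc} by working entirely in $\C$ and then comparing with Definition \ref{def algsigma}. We regard all rational functions as elliptic functions for $\Lambda$, as in Remark \ref{f as ell func}. Let $w_n = z_1/n$, so $P_n$ corresponds to $w_n$. Write $Q_n = P_{2n}$, corresponding to $z_1/(2n)$; then $Q_1$ corresponds to $z_1/2$ and $Q_{2N}$ to $z_1/(4N)$.

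\emph{Step 1: an explicit choice of $g$.} The divisor $[2N]^*(\tau_{Q_1}^*(O)-(O))$ is supported on the points $z$ with $2Nz \equiv -z_1/2$, minus $(2N)$-torsion. We take
\[
g(z)=\frac{\sigma_{z_1/2}(2Nz)}{\sigma(2Nz)}\cdot \exp(c z^2 + d z)
\]
for suitable constants $c,d$ making it $\Lambda$-periodic. A cleaner route avoids computing $g$ at all. Consider the meromorphic function
\[
\Phi(z)=\frac{\sigma_{z_1}(2Nz)}{\sigma(2Nz)} .
\]
Using the transformation formula \eqref{transW} and $2N z_1\in 2\Lambda$ (note $N z_1\in\Lambda$), one checks that $\sigma_{z_1}(z+\gamma)=\psi(\gamma)e^{\eta(\gamma)(z+\gamma/2)}e^{(\eta(\gamma)z_1-\gamma\eta(z_1))}\sigma_{z_1}(z)$. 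The Legendre relation factor $e^{\eta(\gamma)z_1-\gamma\eta(z_1)}$ is a root of unity, and it becomes trivial after replacing $\gamma$ by $2N\gamma$. Hence $\Phi$ is $\Lambda$-periodic, i.e.\ elliptic. Its divisor is exactly $[2N]^*(\tau_{P_1}^*(O)-(O))$, the divisor of $f_{\tilde P}$. So $f_{\tilde P}=C\,\Phi$ for a constant $C$.

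\emph{Step 2: fixing the constant.} This is the main obstacle. $f_{\tilde P}$ is normalized not by a leading coefficient but by the antisymmetric construction $f=\tau_{Q_{2N}}^*(g/[-1]^*g)$. We determine $C$ by evaluating the defining symmetry. Set $u=z+z_1/(4N)$. Then $f(z)=g(u)/g(-u)$, so $f(z)\,f(-z-z_1/(2N))=1$.

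We check the same identity for $\Phi$, using $\sigma$ odd and the quasi-periodicity of $\sigma_{z_1}$. The key computation is $\sigma_{z_1}(-2Nz - z_1)=-\sigma_{-z_1}(-2Nz)\cdot(\text{exp factor})$, which should reduce to $\sigma_{z_1}(2Nz)^{-1}\sigma(2Nz)\sigma(2Nz+z_1)\cdot$ unity. This gives $\Phi(z)\Phi(-z-z_1/(2N))=1$, hence $C^2=1$.

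The sign is then pinned down by continuity in $z_1$: deform $z_1$ along a path in $\Lambda\otimes\Q$, or compare with a direct evaluation of $g/[-1]^*g$ through its own sigma-product expression. We would try the latter, writing $g(u)=\prod \sigma(u-a_i)/\sigma(u-b_i)$ over divisor representatives. The quotient $g(u)/g(-u)$ then becomes a ratio of sigma products, and the sign is visible directly.

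\emph{Step 3: evaluation.} Near $z=0$ we have $t=-2x/y=z+O(z^5)$, so $[2N]^*t = 2Nz + O(z^2)$, while $\sigma(2Nz)=2Nz+O(z^5)$. Hence
\[
\sigma^{\rm alg}(\tilde P)=\lim_{z\to0}\Phi(z)\,[2N]^*t=\sigma_{z_1}(0)=e^{-\frac12 z_1\eta(z_1)}\sigma(z_1).
\]

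\emph{Rationality.} The statement $\sigma_{z_1}(0)\in\Q(g_2,g_3,E[4N^2])$ follows from Proposition \ref{definition field}, applied to $E$ over $\Q(g_2,g_3)$. For the Taylor coefficients, note that $\sigma_{z_1}(z)=\sigma^{\rm alg}$-type expressions: the function $\sigma_{z_1}(z)/\sigma(z)$ is, up to $\exp(-\eta(z_1)z)$, the function $\sigma(z+z_1)/\sigma(z)\sigma(z_1)$. Its expansion is governed by the zeta and $\wp$ functions at the torsion point $z_1$, and by $\zeta(z_1)-\eta(z_1)$, which is algebraic because $\zeta(z_1)-\eta(z_1)$ equals a value of an elliptic function at $N$-torsion. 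Combining this with \eqref{exp sigma} gives rationality of all coefficients.
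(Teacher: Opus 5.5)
\textbf{Gap: the sign of $C$.} Steps 1 and 3 are correct, and the symmetry $f_{\tilde P}(z)\,f_{\tilde P}(-z-z_1/(2N))=1$, which you also verify for $\Phi$, does give $f_{\tilde P}=C\Phi$ with $C^2=1$. But the theorem is an exact identity, and you never show that $C=+1$. As written, you only prove $\sigma^{\rm alg}(\tilde P)=\pm\sigma_{z_1}(0)$.

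Neither of your proposed remedies closes this. ``Continuity in $z_1$'' is not available. The set $\Lambda\otimes\mathbb{Q}$ is countable and contains no nonconstant paths. Moreover, $f_{\tilde P}$ is defined only for torsion $z_1$, through a divisor pulled back by $[2N]$ whose level $N$ jumps with $z_1$. The product representation $g(u)=\prod\sigma(u-a_i)/\sigma(u-b_i)$ does not help either. You must first adjust representatives so that $\sum a_i=\sum b_i$ exactly. Comparing $g(u)/g(-u)$ with $\Phi$ then produces quasi-periodicity factors and signs $\psi(\cdot)$ from re-pairing the terms. The sign you need is exactly the output of that bookkeeping, so saying it is ``visible directly'' leaves your self-declared main obstacle open.

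\textbf{How to fix it.} You had the fix at the start of Step 1. By Proposition~\ref{ell func lem}, $\sigma_{z_1/2}(2Nz)/\sigma(2Nz)$ is already $\Lambda$-periodic, because $2N\cdot z_1/2=Nz_1\in\Lambda$. So no factor $\exp(cz^2+dz)$ is needed, and you may take $g(z)=c\,\sigma_{z_1/2}(2Nz)/\sigma(2Nz)$. Substitute $u=z+z_1/(4N)$ into $g(u)/g(-u)$ and use that $\sigma$ is odd. The constant $c$ cancels and you get exactly
\[
\exp\bigl(-\eta(z_1)(2Nz+\tfrac12 z_1)\bigr)\,\frac{\sigma(2Nz+z_1)}{\sigma(2Nz)}=\Phi(z).
\]
This is the paper's argument, and it makes your Step~2 unnecessary.

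Alternatively, keep your $C^2=1$ and evaluate at $z=-z_1/(4N)$, i.e.\ $u=0$. If $P_1\neq O$, then $z_1/2\notin\Lambda$, so $u=0$ is a simple pole of $g$ and $g(u)/g(-u)\to-1$. On the other side,
\[
\Phi(-z_1/(4N))=\frac{\sigma_{z_1}(-z_1/2)}{\sigma(-z_1/2)}=\frac{\sigma(z_1/2)}{\sigma(-z_1/2)}=-1 .
\]
Hence $C=1$. If $P_1=O$, both sides of the identity vanish, by Remark~\ref{rmk algsigma}(3) and $\sigma(z_1)=0$.

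\textbf{Taylor coefficients.} Your argument via $\zeta(z_1)-\eta(z_1)$ can be completed, for example by telescoping the addition formula for $\zeta$ over the points $z+jz_1$, $0\le j<N$. It needs more justification than the one line you give, though. Once $f_{\tilde P}=\pm\Phi$ is known, the statement is immediate, as in the paper: $\sigma_{z_1}(2Nz)=\pm f_{\tilde P}(z)\sigma(2Nz)$. Here $f_{\tilde P}$ is rational over $\mathbb{Q}(g_2,g_3,E[4N^2])$ and $\sigma$ has coefficients in $\mathbb{Q}(g_2,g_3)$ by \eqref{exp sigma}. The sign is irrelevant for this part.
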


To prove Theorem \ref{an desc}, we prepare several lemmas. 
For $z_1, w_1 \in \Lambda \otimes \mathbb{Q}$, we define
\begin{align*}
\langle z_1, w_1 \rangle \coloneqq \exp \left(z_1 \eta(w_1)-w_1 \eta(z_1)\right).
\end{align*}

\begin{lem}\label{rootlem}
Let $z_1, w_1 \in \Lambda \otimes \mathbb{Q}$ and let $M, N$ be positive integers. If $Mz_1, Nw_1 \in \Lambda$, then $\langle z_1, w_1 \rangle$ is an $MN$-th root of unity.
\end{lem}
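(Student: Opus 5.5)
We prove Lemma \ref{rootlem} using the Legendre relation together with the $\mathbb{Z}$-bilinearity and antisymmetry of the pairing
\begin{align*}
e(z,w)\coloneqq z\eta(w)-w\eta(z), \qquad z,w \in \Lambda\otimes\mathbb{Q}.
\end{align*}
Since $\eta$ is extended $\mathbb{Q}$-linearly from $\Lambda$ to $\Lambda\otimes\mathbb{Q}$, the map $e$ is $\mathbb{Q}$-bilinear and alternating. It is bilinear over $\mathbb{Q}$, though not over $\mathbb{C}$, since $z$ and $w$ enter only through $\mathbb{Q}$-linear combinations of lattice vectors. Hence
\begin{align*}
MN\, e(z_1,w_1)=e(Mz_1,Nw_1), \qquad\text{and so}\qquad \langle z_1,w_1\rangle^{MN}=\exp\bigl(e(Mz_1,Nw_1)\bigr).
\end{align*}
It therefore suffices to show that $\exp(e(\gamma,\delta))=1$ for all $\gamma,\delta\in\Lambda$, that is, that $e(\gamma,\delta)\in 2\pi i\mathbb{Z}$ on $\Lambda\times\Lambda$.

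For this, choose a basis $\omega_1,\omega_2$ of $\Lambda$ with $\operatorname{Im}(\omega_2/\omega_1)>0$ and put $\eta_j=\eta(\omega_j)$. The Legendre relation $\eta_1\omega_2-\eta_2\omega_1=2\pi i$ (see \cite[I \S 5]{ATAEC}) gives $e(\omega_2,\omega_1)=2\pi i$. Bilinearity and antisymmetry then show that for $\gamma=a\omega_1+b\omega_2$ and $\delta=c\omega_1+d\omega_2$,
\begin{align*}
e(\gamma,\delta)=(bc-ad)\,2\pi i \in 2\pi i\mathbb{Z}.
\end{align*}
Consequently $\langle z_1,w_1\rangle^{MN}=1$, which is the claim.

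If we want to avoid quoting Legendre's relation, we can derive it directly. Integrate $\zeta(z)$ around the boundary of a fundamental parallelogram containing exactly one pole, and use $\zeta(z+\omega_j)=\zeta(z)+\eta_j$ to pair opposite sides. This is the only nontrivial input, and it is standard; the rest is bookkeeping. The main point to check carefully is that the $\mathbb{Q}$-linear extension of $\eta$ really makes $e$ bilinear over $\mathbb{Q}$, so that the scaling identity $MN\,e(z_1,w_1)=e(Mz_1,Nw_1)$ is legitimate. This holds by definition of the extension.
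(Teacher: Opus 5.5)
Your proof is correct and follows the paper's own argument: both reduce to the Legendre relation, showing that $Mz_1\eta(Nw_1)-Nw_1\eta(Mz_1)\in 2\pi i\mathbb{Z}$ and hence $\langle z_1,w_1\rangle^{MN}=1$. You simply spell out what the paper leaves implicit, namely the $\mathbb{Q}$-bilinearity of $e$ and the extension of the Legendre relation from a basis to all pairs $\gamma,\delta\in\Lambda$.
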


\begin{proof}
The Legendre relation \cite[I Proposition 5.2(d)]{ATAEC} implies that $Mz_1 \eta(Nw_1)-Nw_1 \eta(Mz_1) \in 2\pi i \mathbb{Z}$. Hence we have $\langle z_1, w_1 \rangle^{MN}=1$.
\end{proof}

\begin{lem}\label{transWt}
Let $z_1 \in \Lambda \otimes \mathbb{Q}$ and $\gamma \in \Lambda$.
We have
\begin{align*}
\sigma_{z_1}(z+\gamma)=\psi(\gamma) \langle z_1, \gamma \rangle \exp \left(\eta(\gamma)\left(z+\frac{1}{2}\gamma\right)\right)\sigma_{z_1}(z).
\end{align*}
\end{lem}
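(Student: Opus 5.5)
We need to prove Lemma \ref{transWt}: $\sigma_{z_1}(z+\gamma)=\psi(\gamma)\langle z_1,\gamma\rangle \exp(\eta(\gamma)(z+\frac12\gamma))\sigma_{z_1}(z)$. The approach is a direct computation from Definition \ref{sigma_z}, the transformation formula \eqref{transW}, and the $\mathbb{Q}$-linearity of the extended quasi-period map $\eta$.

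First, by Definition \ref{sigma_z},
\begin{align*}
\sigma_{z_1}(z+\gamma)=\exp\left(-\eta(z_1)\left(z+\gamma+\tfrac12 z_1\right)\right)\sigma(z+z_1+\gamma).
\end{align*}
Next, apply \eqref{transW} with the variable $z+z_1$ and the period $\gamma\in\Lambda$. This is valid because \eqref{transW} holds for every complex argument. It gives
\begin{align*}
\sigma(z+z_1+\gamma)=\psi(\gamma)\exp\left(\eta(\gamma)\left(z+z_1+\tfrac12\gamma\right)\right)\sigma(z+z_1).
\end{align*}
Now substitute $\sigma(z+z_1)=\exp\left(\eta(z_1)(z+\tfrac12 z_1)\right)\sigma_{z_1}(z)$, which is just Definition \ref{sigma_z} rearranged.

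Finally, collect the exponents. The terms $-\eta(z_1)(z+\tfrac12 z_1)$ and $+\eta(z_1)(z+\tfrac12 z_1)$ cancel. What remains is
\begin{align*}
-\eta(z_1)\gamma+\eta(\gamma)\left(z+\tfrac12\gamma\right)+z_1\eta(\gamma).
\end{align*}
The part $z_1\eta(\gamma)-\gamma\eta(z_1)$ exponentiates to $\langle z_1,\gamma\rangle$ by definition. The rest, $\eta(\gamma)(z+\tfrac12\gamma)$, gives the stated exponential factor. Together with $\psi(\gamma)$ this yields the lemma.

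There is no real obstacle; the computation is routine. The only points to keep straight are the orientation of the pairing, $\langle z_1,\gamma\rangle=\exp(z_1\eta(\gamma)-\gamma\eta(z_1))$, and the fact that $\eta(z_1)$ means the linear extension of $\eta$ to $\Lambda\otimes\mathbb{Q}$. That extension is used only as a fixed scalar, so no additional property of $\eta$ is needed.
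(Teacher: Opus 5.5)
Your proposal is correct and follows the same route as the paper: apply the transformation formula \eqref{transW} at the argument $z+z_1$ and collect the exponents. The leftover term $z_1\eta(\gamma)-\gamma\eta(z_1)$ is exactly the exponent of $\langle z_1,\gamma\rangle$, which is what the paper's one-line proof leaves implicit.
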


\begin{proof}
This follows from the transformation formula $(\ref{transW})$ and the definition of $\langle z_1, \gamma \rangle$.
\end{proof}

\begin{rmk}
Viewed as a function of $z_1$, $\sigma_{z_1}(z)$ is not periodic with respect to $\Lambda$.
More precisely, we have 
\begin{align}\label{transWt'}
\sigma_{z_1+\gamma}(z)=\psi(\gamma) \langle \frac{1}{2}z_1, \gamma \rangle \sigma_{z_1}(z)
\end{align}
for all $\gamma \in \Lambda$. In particular, if $z_1 \in 2\Lambda$, then we have $\sigma_{z_1}(z)=\sigma(z)$.
\end{rmk}

Combining the transformation formula $(\ref{transW})$ with Lemma \ref{rootlem} and Lemma \ref{transWt}, we obtain the following proposition.

\begin{prop}\label{ell func lem}
Let $z_1 \in \Lambda \otimes \mathbb{Q}$ and let $N$ be a positive integer. If $Nz_1 \in \Lambda$, then $\sigma_{z_1}(Nz)/\sigma(Nz)$ is periodic with respect to $\Lambda$.
\end{prop}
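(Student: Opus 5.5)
The plan is to compute the quotient $h(z) \coloneqq \sigma_{z_1}(Nz)/\sigma(Nz)$ under a translation $z \mapsto z+\gamma$ with $\gamma \in \Lambda$, and to check that the multiplicative factors from the numerator and the denominator agree. Since $Nz$ shifts by $N\gamma \in \Lambda$, the transformation laws for $\sigma_{z_1}$ and $\sigma$ apply directly.

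For the numerator, Lemma \ref{transWt} with the lattice element $N\gamma$ gives
\begin{align*}
\sigma_{z_1}(Nz+N\gamma)=\psi(N\gamma)\langle z_1, N\gamma\rangle \exp\left(\eta(N\gamma)\left(Nz+\frac{1}{2}N\gamma\right)\right)\sigma_{z_1}(Nz).
\end{align*}
For the denominator, the transformation formula \eqref{transW} gives
\begin{align*}
\sigma(Nz+N\gamma)=\psi(N\gamma)\exp\left(\eta(N\gamma)\left(Nz+\frac{1}{2}N\gamma\right)\right)\sigma(Nz).
\end{align*}
Dividing the first identity by the second, the sign $\psi(N\gamma)$ cancels, and so does the exponential factor. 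Hence
\begin{align*}
h(z+\gamma)=\langle z_1, N\gamma\rangle\, h(z).
\end{align*}

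It remains to show that $\langle z_1, N\gamma\rangle=1$, which is the only nontrivial point. By the definition of the pairing and the $\mathbb{Q}$-linearity of $\eta$,
\begin{align*}
\langle z_1, N\gamma\rangle=\exp\bigl(N(z_1\eta(\gamma)-\gamma\eta(z_1))\bigr)=\langle z_1,\gamma\rangle^N .
\end{align*}
We have $Nz_1 \in \Lambda$ by hypothesis and $1\cdot\gamma \in \Lambda$, so Lemma \ref{rootlem} applied with $M=N$ and second integer $1$ shows that $\langle z_1,\gamma\rangle$ is an $N$-th root of unity. Therefore $\langle z_1,\gamma\rangle^N=1$.

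Combining these steps gives $h(z+\gamma)=h(z)$ for every $\gamma\in\Lambda$. This holds as an identity of meromorphic functions, off the discrete set where $\sigma(Nz)$ vanishes. So $\sigma_{z_1}(Nz)/\sigma(Nz)$ is periodic with respect to $\Lambda$.
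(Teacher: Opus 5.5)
Your proof is correct and follows the paper's route: the paper obtains the proposition by combining \eqref{transW} with Lemma \ref{transWt} and Lemma \ref{rootlem}. You do exactly this, applying both transformation laws to the lattice element $N\gamma$. Everything cancels except $\langle z_1, N\gamma\rangle=\langle z_1,\gamma\rangle^N$, which equals $1$ by Lemma \ref{rootlem}.
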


Proposition \ref{ell func lem} allows us to view $\sigma_{z_1}(Nz)/\sigma(Nz)$ as a rational function on $E$. Note that we identify $z_1 \in \Lambda \otimes \mathbb{Q}$ with the point $P_1 \in E(\mathbb{C})$. Since the Weierstrass sigma function $\sigma(z)$ is a holomorphic function with simple zeros on $\Lambda$ and no other zeros, we find that
\begin{align*}
{\rm div}\left(\sigma_{z_1}(Nz)/\sigma(Nz)\right)=[N]^{\ast}(\tau_{P_1}^{\ast} (O)-(O)).
\end{align*}
(This also gives an explicit proof of Lemma \ref{principal lem} in the case $K=\mathbb{C}$.)

\begin{proof}[Proof of Theorem \ref{an desc}]
If we let $P_n \in E(\mathbb{C})$ be the point corresponding to $z_n \coloneqq z_1/n \in \Lambda \otimes \mathbb{Q}$, then $\tilde{P} \coloneqq (P_n)_n \in V(E)$. We also put $w_n=z_{2n}$.

For simplicity, put $F=\mathbb{Q}(g_2, g_3, E[4N^2])$. 
We take a rational function $g \in F(E)$ whose divisor is 
\[{\rm div}(g)=[2N]^{\ast}(\tau_{Q_1}^{\ast} (O)-(O)).\] If we identify rational functions on $E$ with elliptic functions with the period lattice $\Lambda$, we may assume that $g(z)=c\sigma_{w_1}(2Nz)/\sigma(2Nz)$ 
for some constant $c \in \mathbb{C}$ by Proposition \ref{ell func lem}. Then, using Remark \ref{f as ell func} and the fact that $\sigma$ is an odd function, we have
\begin{align*}
f_{\tilde{P}}(z)&=\frac{\sigma_{w_1}(2Nz+w_1)}{\sigma(2Nz+w_1)} \cdot \frac{\sigma(-2Nz-w_1)}{\sigma_{w_1}(-2Nz-w_1)} \\
&=\exp \left(-\eta(z_1)\left(2Nz+\frac{1}{2}z_1\right)\right) \frac{\sigma(2Nz+z_1)}{\sigma(2Nz)}
=\frac{\sigma_{z_1}(2Nz)}{\sigma(2Nz)}
\end{align*}
as an elliptic function. Hence we obtain
\begin{align*}
\sigma^{\rm alg}(\tilde{P})&=f_{\tilde{P}}(\lambda(t))\cdot [2N]^\ast(t) |_{t=0}\\
&=\sigma_{z_1}(\lambda([2N]^\ast(t)))\cdot \frac{[2N]^\ast(t)}{\sigma(\lambda([2N]^\ast(t)))} \mid_{t=0}=\sigma_{z_1}(0).
\end{align*}
The second assertion follows immediately from the first assertion and Proposition~\ref{definition field}.
By construction, $f_{\tilde{P}}(z)$ is a rational function over $F$.
Hence, the Laurent coefficients of $f_{\tilde{P}}(z)$ and $\sigma(2Nz)$ at $z=0$ are in $F$. Since $\sigma_{z_1}(2Nz)=f_{\tilde{P}}(z)\sigma(2Nz)$, the last assertion follows.
 \end{proof}

\section{The algebraic sigma series and $p$-adic limits of division polynomials}

In this section, we associate to each $\tilde{T} \in V(E)$ its
{\it algebraic sigma series} $\widehat{\sigma_{\tilde{T}}}(t)$, a formal
power series whose constant term is $\sigma^{\rm alg}(\tilde{T})$, and
prove Theorem \ref{main1}.
We keep the notation of Section 1; in particular, $E$ is an elliptic curve
given by a Weierstrass equation \eqref{WeiEq} over a field $K$ of
characteristic $0$, and $t=-2x/y$ is the local parameter at $O$.

Let $\tilde{T}=(T_n)_{n \geq 1} \in V(E)$ with $T_1 \in E_{\rm tor}$ of
order $N \geq 2$. By Lemma \ref{principal lem}, there exists a unique
rational function $g_{\tilde{T}} \in \overline{K}(E)$ whose divisor is
\[{\rm div}(g_{\tilde{T}})=[N]^{\ast}(\tau_{T_1}^{\ast} (O)-(O))\] and which
satisfies $(tg_{\tilde{T}}(t))(O)=N^{-1}\sigma^{\rm alg}(\tilde{T})$.
We note that $g_{\tilde{T}}$ is defined over $K(E[4N^2])$
(cf.\ Proposition \ref{definition field}), and we regard
$g_{\tilde{T}}(t) \in K(E[4N^2])((t))$ as a formal Laurent series via the
formal completion at $O$.

Since the sigma function is not a rational function on $E$, 
its \(t\)-expansion is not obtained by completing a rational function at \(O\). We instead define it by
$\hat{\sigma}(t)=\sigma(\lambda(t))$, by the convention of Section 1
applied to the formal power series \eqref{exp sigma}.
Over $\mathbb{C}$, if $z_1 \in \Lambda \otimes \mathbb{Q}$ corresponds to
$\tilde{T}$ as in Remark \ref{ASSoverC}, then
$\sigma_{z_1}(z)=\sigma(z)\,g_{\tilde{T}}(z/N)$, which motivates the
following definition.

\begin{dfn}\label{def ASS}
(1) For $\tilde{T}=(T_n)_{n \geq 1} \in V(E)$ with $T_1 \in E_{\rm tor}$ of
order $N \geq 2$, we define the {\it algebraic sigma series}
$\widehat{\sigma_{\tilde{T}}}(t)$ to be
\begin{align*}
\widehat{\sigma_{\tilde{T}}}(t) \coloneqq \sigma(\lambda(t))\,
g_{\tilde{T}}([N^{-1}]^{\ast}(t)),
\end{align*}
where $\sigma(z)$ is the formal power series given by \eqref{exp sigma},
and $[N^{-1}]^{\ast}(t) \in K[[t]]$ denotes the compositional inverse of
the power series $[N]^{\ast}(t)=Nt+O(t^2)$, which exists since
${\rm char}(K)=0$.
A priori the right-hand side lies in $K(E[4N^2])((t))$, but the simple pole
of $g_{\tilde{T}}([N^{-1}]^{\ast}(t))$ at $t=0$ cancels against the simple
zero of $\sigma(\lambda(t))$, so that
$\widehat{\sigma_{\tilde{T}}}(t) \in K(E[4N^2])[[t]]$.

(2) If $\tilde{T}=(T_n)_{n \geq 1} \in V(E)$ with $T_1=O$, we set $N=1$ and
define $\widehat{\sigma_{\tilde{T}}}(t) \coloneqq \psi(\tilde{T})
\sigma(\lambda(t))$, where $\psi(\tilde{T}) \in \{\pm 1\}$ and
$\psi(\tilde{T})=1$ if and only if $T_2=O$.
\end{dfn}

By construction, we have
$\widehat{\sigma_{\tilde{T}}}(0)=\sigma^{\rm alg}(\tilde{T})$, since
$[N^{-1}]^{\ast}(t)=N^{-1}t+O(t^2)$ and
$(tg_{\tilde{T}}(t))(O)=N^{-1}\sigma^{\rm alg}(\tilde{T})$.

\begin{rmk}\label{ASSoverC}
We set $K=\mathbb{C}$ and fix a uniformization
$\mathbb{C}/\Lambda \simeq E(\mathbb{C})$. If we take
$z_1 \in \Lambda \otimes \mathbb{Q}$, under the inclusion
$\Lambda \otimes \mathbb{Q} \hookrightarrow V(E)$, we find that
$\sigma_{z_1}(\lambda(t))=\widehat{\sigma_{\tilde{P}}}(t)$. Here,
$\tilde{P}=(P_n)_{n \geq 1} \in V(E)$ and $P_n$ corresponds to
$z_1/n \in \Lambda \otimes \mathbb{Q}$ for each $n$.
Indeed, if $P_1$ has order $N \geq 2$, the characterization of
$g_{\tilde{P}}$ by its divisor and its normalization at $O$ gives
$g_{\tilde{P}}(z)=\sigma_{z_1}(Nz)/\sigma(Nz)$ as elliptic functions;
since $\lambda([N^{-1}]^{\ast}(t))=\lambda(t)/N$, the substitution
$t \mapsto [N^{-1}]^{\ast}(t)$ realizes $z \mapsto z/N$, and hence
$\sigma(\lambda(t))\,g_{\tilde{P}}([N^{-1}]^{\ast}(t))
=\sigma_{z_1}(\lambda(t))$.
The case $P_1=O$ follows from \eqref{transWt'}.
\end{rmk}

\subsection{$p$-adic properties of sigma functions}

For the rest of this section, let $p \geq  5$ be a prime, let $K$ be a finite extension of $\Q_p$, and
assume that $E$ is given by a smooth Weierstrass equation of the form $y^2 = 4x^3-g_2x-g_3$
(in particular, $g_2, g_3 \in \mathcal{O}_K$).

The following lemma, which is the key lemma in the proof of Theorem \ref{main1}, gives $p$-adic properties of algebraic sigma functions $\sigma^{\rm alg}$.

\begin{lem}\label{keylem}
Let $\tilde{T}=(T_n)_{n \geq 1} \in V(E)$ with $T_1 \in E_{\rm tor}$ of order $N$. Suppose that $N \geq 2$ and that $p \nmid N$.
\begin{enumerate}
\item 
The value $\sigma^{\rm alg}(\tilde{T})$ is a $p$-adic unit in $K(E[4N^2])$.
\item Let $a$ be an element of the maximal ideal of $\mathcal{O}_K$. If the absolute value of $a$ is sufficiently small, then $\widehat{\sigma_{\tilde{T}}}(a)$ is a $p$-adic unit in $K(E[4N^2])$ and we have $|\widehat{\sigma_{\tilde{T}}}(a)-\sigma^{\rm alg}(\tilde{T})|<1$.
\end{enumerate}
\end{lem}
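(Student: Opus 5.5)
The plan is to prove (1) by writing $\sigma^{\rm alg}(\tilde T)$ as a combination of values of \emph{integral} rational functions at torsion points whose reductions stay away from the zeros and poles of those functions. Part (2) will then follow from (1) by a continuity argument. Put $L=K(E[4N^2])$. Since $p\nmid N$ and $E$ has good reduction, the points of $E[4N^2]$ are $\mathcal O_L$-integral and reduce injectively into $\tilde E(\overline{\mathbb F}_p)$. Also $t=-2x/y$ is a uniformizer of the formal group for the smooth model \eqref{WeiEq}.

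Step 1 (an integral choice of $g$). Recall that $\sigma^{\rm alg}(\tilde T)=(f_{\tilde T}\cdot[2N]^*(t))(O)$ with $f_{\tilde T}=\tau^*_{Q_{2N}}(g/[-1]^*g)$, and that this value does not depend on the normalizing constant of $g$. I take
\begin{align*}
g=h/F_{2N}, \qquad {\rm div}(h)=\sum_{2NQ=-Q_1}(Q)-4N^2(O).
\end{align*}
Then ${\rm div}(g)=[2N]^*(\tau^*_{Q_1}(O)-(O))$, as required. The function $h$ lies in $L(4N^2(O))$, which has the integral basis $x^i$, $x^jy$. The functions in this space vanishing on the divisor $D=\sum_{2NQ=-Q_1}(Q)$ form a saturated $\mathcal O_L$-lattice. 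Its reduction is contained in the space of functions on $\tilde E$ vanishing on $\tilde D$. Because $\tilde D$ consists of $4N^2$ distinct points, none equal to $\tilde O$ (note $Q_1\neq O$ modulo $\mathfrak m$), that space is one-dimensional, spanned by a function with exact pole order $4N^2$. So I may normalize $h$ to be integral with unit leading coefficient in $t^{-4N^2}$. Its reduction $\tilde h$ then has exactly the divisor $\tilde D-4N^2(\tilde O)$, and in particular has simple zeros. The same argument shows that $F_{2N}$ is integral and its reduction has divisor $[2N]^*(\tilde O)-4N^2(\tilde O)$. (One could also cite the known integrality of division polynomials over $\mathcal O_K$ for $p\ge 5$.)

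Step 2 (evaluation at $O$). Expanding $f_{\tilde T}$ at $O$ gives
\begin{align*}
\sigma^{\rm alg}(\tilde T)=\frac{F_{2N}(-Q_{2N})}{F_{2N}(Q_{2N})}\cdot\frac{1}{h(-Q_{2N})}\cdot \lim_{t\to0}\frac{h(t\oplus Q_{2N})\,[2N]^*(t)}{1},
\end{align*}
up to a sign coming from $[-1]^*t=-t$. I then check that each factor is a unit.
\begin{itemize}
\item The ratio $F_{2N}(-Q_{2N})/F_{2N}(Q_{2N})$ is $\pm1$, because $F_{2N}$ has a definite parity.
\item $F_{2N}(Q_{2N})$ is a unit, because $Q_{2N}$ does not reduce into $\tilde E[2N]$: its image under $2N$ is $Q_1\ne O$.
\item $h(-Q_{2N})$ is a unit, because $-\tilde Q_{2N}$ is not in the support of $\tilde D$: indeed $2N(-Q_{2N})=-Q_1$ is not $-Q_1$ only if \ldots{} Here I will need to check carefully which translate actually occurs. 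If $-Q_{2N}$ lies in the support, I replace it by the correct non-support point dictated by the definition. This bookkeeping is the point I expect to be fiddly.
\item The limit is $2N\cdot (dh/dt)(Q_{2N})$. Here $2N$ is a unit, and the derivative is a unit because $\tilde h$ has a simple zero at $\tilde Q_{2N}$.
\end{itemize}
Together these give (1).

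Step 3 (part (2)). Over $L$, the series $\lambda(t)$ converges on $|t|<1$ with $|\lambda(t)|=|t|$ for small $t$. The series \eqref{exp sigma} has a positive $p$-adic radius of convergence: its coefficients are bounded by factorial denominators, giving radius at least about $p^{-1/(p-1)}$. The series $g_{\tilde T}([N^{-1}]^*(t))$ is a Laurent series in $t$ with a simple pole, whose coefficients have bounded denominators since $g_{\tilde T}$ is defined over $L$. Hence $\widehat{\sigma_{\tilde T}}(t)\in L[[t]]$ converges on some small disc and defines a continuous function there with value $\sigma^{\rm alg}(\tilde T)$ at $t=0$. For $|a|$ sufficiently small we therefore get $|\widehat{\sigma_{\tilde T}}(a)-\sigma^{\rm alg}(\tilde T)|<1$. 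Since $\sigma^{\rm alg}(\tilde T)$ is a unit by (1), $\widehat{\sigma_{\tilde T}}(a)$ is a unit as well.

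The main obstacle is Step 1 together with the bookkeeping in Step 2. One must show that the normalized $h$ really has reduction with the expected simple-zero divisor; this is where $p\nmid N$ and good reduction enter. One must also make sure that none of the evaluation points reduce into the support of the reduced divisors.
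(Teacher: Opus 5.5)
Your route differs from the paper's and can be made to work, but Step 2 as written is wrong, exactly at the point you left open. With $Q_n=T_{2n}$ we have $2N\cdot(-Q_{2N})=-Q_1$, so $-Q_{2N}$ \emph{does} lie in the support of $D$. Hence $h(-Q_{2N})=0$, not a unit. Your limit factor is $h(Q_{2N})\cdot 0=0$, so the displayed formula reads $0/0$: you have swapped the roles of $\pm Q_{2N}$.

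Write $P_t$ for the point of $\hat E$ with parameter $t$. The definition of $f_{\tilde T}$ gives
\[
f_{\tilde T}(P)=\frac{h(P+Q_{2N})}{h(-P-Q_{2N})}\cdot\frac{F_{2N}(-P-Q_{2N})}{F_{2N}(P+Q_{2N})},\qquad
\sigma^{\rm alg}(\tilde T)=\pm\, h(Q_{2N})\lim_{t\to 0}\frac{[2N]^*(t)}{h(-P_t-Q_{2N})},
\]
using $[-1]^*F_{2N}=\pm F_{2N}$. So the simple pole of $f_{\tilde T}$ at $O$ comes from the zero of $h$ at $-Q_{2N}$. Two facts are then needed.
\begin{enumerate}
\item[(i)] $h(Q_{2N})\in\mathcal O_L^\times$. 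Indeed $2N\tilde Q_{2N}=\tilde Q_1\neq-\tilde Q_1$, because $2\tilde Q_1=\tilde T_1\neq\tilde O$. Hence $\tilde Q_{2N}\notin{\rm supp}(\tilde D)\cup\{\tilde O\}$.
\item[(ii)] The limit equals $2N/c_1$, where $c_1$ is the linear coefficient of the $t$-expansion of $h(-P_t-Q_{2N})$, and $c_1$ is a unit. You must justify that this expansion lies in $\mathcal O_L[[t]]$ and reduces to the expansion at $\tilde O$ of $\tilde h\circ\tau_{-\tilde Q_{2N}}\circ[-1]$. This holds because translation by the integral section $-Q_{2N}$ is an automorphism of the smooth model and $\tilde Q_{2N}\neq\tilde O$. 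The simple zero of $\tilde h$ at $-\tilde Q_{2N}$ then gives $c_1\not\equiv 0$.
\end{enumerate}
Your separate claim that $F_{2N}(Q_{2N})$ is a unit is not needed.

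With this repair the argument is complete, and it compares with the paper as follows. The paper obtains $g$ with purely horizontal divisor on the smooth model from the relative Abel theorem and ${\rm Pic}(\mathcal O_L)=0$. It then finishes in one stroke: no component of ${\rm div}(f_{\tilde T}\cdot[2N]^*(t))$ passes through $\overline O$. The section $\mathcal O$ cancels, the special fiber does not occur, and the other sections miss $\overline O$ since $p\nmid 2N$. So this function is a unit in $\mathcal O_{\mathcal E,\overline O}$, and its value at $O$ is a unit.

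Your Step 1 takes a primitive vector of the saturated Riemann--Roch lattice, whose reduction is forced to have divisor $\tilde D-4N^2(\tilde O)$. This is an elementary, self-contained replacement for the Abel/Pic input. Moreover, once $h$ and $F_{2N}$ have nonzero reductions, $g=h/F_{2N}$ has no vertical component. You could therefore conclude exactly as the paper does and skip the factor-by-factor bookkeeping that went wrong.

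Your Step 3 is the paper's part (2). Your factorial bound for the sigma series is fine, since the $a_{m,n}$ lie in $\mathbb Z[1/3]$ and $p\ge 5$, and it replaces the citation. One small point: being defined over $L$ does not by itself bound the denominators of $g_{\tilde T}([N^{-1}]^*(t))$. However, you only need a positive radius of convergence, which the expansion of any rational function has, and composition with the integral series $[N^{-1}]^*(t)$ preserves it.
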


\begin{proof}
\noindent\textup{(1)} Let $L=K(E[4N^{2}])$, let $R$ be the ring of integers of $L$, and let $\kappa$ be its residue field. We denote the normalized valuation on $L$ by $v_L$. Let $f: \mathcal{E} \to \mathrm{Spec}(R)$ be the smooth projective relative curve defined by the smooth Weierstrass equation \eqref{WeiEq}. For each $P\in E(L)$, let $\mathcal{P}$ denote the corresponding section in $\mathcal{E}(R)$, and let $\overline{P}$ denote its reduction to the special fiber $\mathcal{E}_\kappa = \mathcal{E} \times_R \kappa$.

Since $p$ is an odd prime, $-2 \in R^\times$. The smoothness of the Weierstrass equation implies that the coordinates $(x/y, 1/y)$ give a smooth local chart of $\mathcal{E}$ around the identity section $\mathcal{O}$. Thus, the rational function $t = -2x/y \in L(E)$ restricts to a local uniformizer at the origin $\overline{O} \in \mathcal{E}_\kappa$. Consequently, the Cartier divisor of $t$ on $\mathcal{E}$ is purely horizontal in an open neighborhood of $\overline{O}$.

Consider the divisor
\[
  D=[2N]^*\bigl(\tau_{\mathcal{T}_2}^*(\mathcal{O})-(\mathcal{O})\bigr)
\]
on \(\mathcal E\). As a linear combination of sections, \(D\) is
purely horizontal. By the same group-law computation as in
Lemma \ref{principal lem}, interpreted via Abel's theorem for the smooth relative
curve \(\mathcal E/R\) (cf.~\cite[Theorem~2.1.2]{KM85}), the class of
\(\mathcal O_{\mathcal E}(D)\) is trivial in
\(\operatorname{Pic}_{\mathcal E/R}(R)\).
The exact sequence of Picard groups associated with $f$ implies that $\mathcal{O}_{\mathcal{E}}(D) \cong f^*\mathcal{M}$ for some line bundle $\mathcal{M}$ on $\mathrm{Spec}(R)$. Since $R$ is a discrete valuation ring, $\mathrm{Pic}(R) = 0$, which yields $\mathcal{O}_{\mathcal{E}}(D) \cong \mathcal{O}_{\mathcal{E}}$. Hence, there exists a rational function $g \in L(E)^{\times}$ such that $\mathrm{div}_{\mathcal{E}}(g) = D$. Since $D$ has no vertical components, $g$ is uniquely determined up to $R^{\times}$, and its divisor has zero multiplicity along $\mathcal{E}_\kappa$.

We define the rational function $f_{\tilde{T}} \coloneqq \tau_{\mathcal{T}_{4N}}^{*}(g/[-1]^{*}g)$. Since translations and the involution $[-1]$ are automorphisms of $\mathcal{E}$ over $R$, the divisor of $f_{\tilde{T}}$ is also purely horizontal:
\begin{align}\label{div_E}
\mathrm{div}_{\mathcal{E}}(f_{\tilde{T}}) = \sum_{2N\mathcal{Q}=-\mathcal{T}_{1}} (\mathcal{Q}) - \sum_{2N\mathcal{R}=\mathcal{O}} (\mathcal{R}).
\end{align}
Note that $f_{\tilde{T}} \in L(E)^{\times}$ is well-defined independently of the choice of $g$.

Since $p \nmid 2N$, the multiplication map $[2N^2]: \mathcal{E} \to \mathcal{E}$ is finite \'{e}tale over $\mathrm{Spec}(R)$, which implies that the reduction map modulo $\mathfrak{p}_L$ is injective on $\mathcal{E}[2N^2](R)$. As $T_1$ has order $N \ge 2$, we have $-\overline{T}_1 \neq \overline{O}$. Thus, $\overline{Q} \neq \overline{O}$ for all $\mathcal{Q}$ appearing in the first sum of the right-hand side in \eqref{div_E}. This means that $\mathcal{O}$ is the only section in the support of $\mathrm{div}_{\mathcal{E}}(f_{\tilde{T}})$ passing through $\overline{O}$, where $f_{\tilde{T}}$ has a simple pole.

Consider the rational function $\Phi \coloneqq f_{\tilde{T}} \cdot [2N]^{*}(t)$ on $\mathcal{E}$. The pullback $[2N]^{*}(t)$ is a uniformizer at $\mathcal{O}$ and its divisor has a simple zero along $\mathcal{O}$ without vertical components. Therefore, in the divisor $\mathrm{div}_{\mathcal{E}}(\Phi)$, the simple pole of $f_{\tilde{T}}$ and the simple zero of $[2N]^{*}(t)$ along $\mathcal{O}$ cancel each other. As a result, $\mathrm{div}_{\mathcal{E}}(\Phi)$ contains neither the section $\mathcal{O}$ nor the special fiber $\mathcal{E}_\kappa$ in its support.

Since $\mathcal{E}$ is a regular scheme, a rational function is an invertible element in the local ring $\mathcal{O}_{\mathcal{E}, \overline{O}}$ if and only if its Weil divisor has multiplicity zero along every prime divisor passing through $\overline{O}$. By the preceding arguments, no prime divisor in the support of $\mathrm{div}_{\mathcal{E}}(\Phi)$ passes through $\overline{O}$. Hence, $\Phi$ is invertible in $\mathcal{O}_{\mathcal{E}, \overline{O}}$. Its reduction at $\overline{O}$ is therefore nonzero, so $\Phi(\mathcal{O}) \in R^\times$. Hence $v_L(\sigma^{\rm alg}(\tilde{T}))=v_L(\Phi(\mathcal{O}))=0$. This proves~(1).

\medskip
\noindent\textup{(2)} By construction, we have $\widehat{\sigma_{\tilde{T}}}(0)=\sigma^{\rm alg}(\tilde{T})$. By part (1), this value is a $p$-adic unit in $L$. 
By definition, $\widehat{\sigma_{\tilde{T}}}(t)=\sigma(\lambda(t))\,g_{\tilde{T}}([N^{-1}]^{\ast}(t))$; since $p \nmid N$, the series $[N^{-1}]^{\ast}(t)$ has coefficients in $\mathcal{O}_K$.
The first factor has positive radius of convergence by \cite{BKY17}, and the second is a convergent Laurent series near $O$. Hence $\widehat{\sigma_{\tilde{T}}}(t)$ has positive radius of convergence. 
Thus, if $|a|$ is sufficiently small, we have $|\widehat{\sigma_{\tilde{T}}}(a)-\sigma^{\rm alg}(\tilde{T})|<1$ and $\widehat{\sigma_{\tilde{T}}}(a)$ is a $p$-adic unit.
%
%
\end{proof}

\subsection{Teichm\"{u}ller lifts and residue discs}

In preparation for the proof of Theorem \ref{main1}, we recall the
Teichm\"{u}ller lift of a point on an elliptic curve and establish a
convergence lemma for rational functions on residue discs
(Lemma \ref{pWPT}).


\begin{dfn}[{\cite[Proposition 10]{Si05a}}]
Let $P \in E(K)$ and suppose that the order $r$ of its reduction is prime
to $p$. Then the limit
\begin{align*}
 \lim_{\substack {k \to \infty \\ p^k \equiv 1 \bmod r}} p^kP
\end{align*}
exists and belongs to $E(K)[r]$; we call it the
{\it Teichm\"{u}ller lift} of $P$.
\end{dfn}

\begin{lem}\label{pWPT}
Let $E$ be an elliptic curve over $K$ with good reduction, let
$f \in K(E)$ be a rational function, and let $P \in E(K)$.
Suppose that $f$ has no poles on the residue disc
$V_P$, the set of points of $E(\mathbb{C}_p)$ whose 
reduction equals that of $P$. 

\begin{enumerate}
\item Let $\mathcal{E}/\mathcal{O}_K$ be the smooth model of $E$, and let
$t$ be a local parameter along the section over $\mathcal{O}_K$ defined by
$P$. 
Regarding $f$ as an element of $K((t))$ via the formal completion along
this section, that is, via the embedding $K(E) \hookrightarrow K((t))$,
we have $f \in \mathcal{O}_K[[t]] \otimes K$.
\item Suppose that the order $r$ of the reduction of $P$ is prime to $p$,
and let $T$ be the Teichm\"{u}ller lift of $P$. Then we have
\begin{align*}
\lim_{\substack {k \to \infty \\ p^k \equiv 1 \bmod r}} f(p^kP)=f(T).
\end{align*}
\end{enumerate}
\end{lem}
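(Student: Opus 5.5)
For part (1), the idea is to reduce the statement to a local regularity statement on the smooth model $\mathcal{E}/\mathcal{O}_K$. Since $f$ has no poles on the residue disc $V_P$, the polar divisor of $f$ on the generic fiber consists of points whose reductions differ from $\overline{P}$. Let $\overline{\mathrm{div}(f)}$ be its closure in $\mathcal{E}$. Then no horizontal component of the polar part passes through the closed point $\overline{P}$. There may also be a vertical component, namely the special fiber with some multiplicity $e$. Writing $\pi$ for a uniformizer of $K$, the function $\pi^{e'}f$ (with $e'$ chosen to cancel the multiplicity along the special fiber) has divisor on the regular scheme $\mathcal{E}$ with no negative component through $\overline{P}$. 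Hence it lies in the local ring $\mathcal{O}_{\mathcal{E},\overline{P}}$, by normality and regularity, exactly as in the proof of Lemma \ref{keylem}(1).

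Because $t$ is a local parameter along the section $\mathcal{P}$, the completion of $\mathcal{O}_{\mathcal{E},\overline{P}}$ along the section (with $\mathcal{E}$ smooth of relative dimension one over $\mathcal{O}_K$) is $\mathcal{O}_K[[t]]$. Therefore $\pi^{e'}f\in\mathcal{O}_K[[t]]$, which gives $f\in\mathcal{O}_K[[t]]\otimes K$. The main point here is to make sure $t$ really is a coordinate on the whole disc: $\widehat{\mathcal{O}}_{\mathcal{E},\overline{P}}\cong\mathcal{O}_K[[t]]$ with $t$ generating the ideal of $\mathcal{P}$. This follows from smoothness together with $t$ reducing to a uniformizer at $\overline{P}$. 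As a consequence, every point of $V_P$ (over $\mathbb{C}_p$) corresponds to a value $t\in\mathfrak{m}_{\mathbb{C}_p}$, and $f$ is given by evaluating the series there.

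For part (2), take $t$ to be a local parameter along the section defined by $T$ rather than $P$. This is allowed because $T$ and $P$ have the same reduction, so $V_T=V_P$ and $f$ has no poles on $V_T$; the same is true with $K$ replaced by the field $K(T)=K$, since $T\in E(K)$. By part (1), $f=\pi^{-e'}\sum_i c_i t^i$ with $c_i\in\mathcal{O}_K$. This series converges on $\mathfrak{m}_{\mathbb{C}_p}$ and is continuous there; indeed it is uniformly Lipschitz: $|f(t_1)-f(t_2)|\le|\pi|^{-e'}|t_1-t_2|$ for $t_1,t_2$ in the open disc.

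It then remains to show that $t(p^kP)\to t(T)=0$ along $p^k\equiv1\bmod r$. Since the reduction of $P$ has order $r$ prime to $p$, we can write $P=T+Q$ with $Q\in\hat{E}(K)$: here $T$ is the Teichm\"uller lift, and $P-T$ reduces to $O$. For $p^k\equiv1\bmod r$ we get $p^kP=T+p^kQ$, and $p^kQ\to O$ in the formal group, because $[p](t)\in p\,t\,\mathcal{O}_K[[t]]+t^p\mathcal{O}_K[[t]]$ contracts $\hat{E}(\mathfrak{m}_K)$ to $0$. This is essentially the argument that the limit defining the Teichm\"uller lift exists.

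The one delicate step is to relate the parameter $t$ at $T$ to the formal-group coordinate of $p^kQ$. Translation by $-T$ is an automorphism of $\mathcal{E}$ carrying $\mathcal{T}$ to $\mathcal{O}$, so we may take $t=\tau_{-T}^*(t_O)$, where $t_O$ is the parameter at $O$. Then $t(p^kP)=t_O(p^kQ)\to0$, and by continuity $f(p^kP)\to f(T)$.
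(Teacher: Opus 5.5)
Your proof is correct. Part (2) is essentially the paper's argument: take the parameter at $T$ to be $\tau_{-T}^{\ast}t_O$, write $P=T+Q$ with $Q\in\hat{E}(K)$, and use $p^kP=T+p^kQ$ together with $[p^k](t_Q)\to 0$. Your Lipschitz bound only makes explicit the continuity that the paper uses tacitly.

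Part (1) takes a genuinely different route.
\begin{itemize}
\item The paper translates to $P=O$ and writes $f=g/h$ with $g,h\in\mathcal{O}_K[[t]]$, using $t^2x(t),t^3y(t)\in\mathcal{O}_K[[t]]$. It then invokes the $p$-adic Weierstrass preparation theorem to replace $h$ by a unit of $\mathcal{O}_K[[t]]$ times a constant.
\item You instead multiply by $\pi^{e'}$ to cancel the multiplicity along the special fiber. Normality of the regular local ring $\mathcal{O}_{\mathcal{E},\overline{P}}$ then puts $\pi^{e'}f$ in $\mathcal{O}_{\mathcal{E},\overline{P}}$, and you pass to $\widehat{\mathcal{O}}_{\mathcal{E},\overline{P}}\cong\mathcal{O}_K[[t]]$. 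This is the same mechanism the paper itself uses in the proof of Lemma~\ref{keylem}(1).
\end{itemize}

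What each approach buys:
\begin{itemize}
\item Your route needs no translation to the origin.
\item It avoids the step the paper passes over quickly: showing that the zeros of the Weierstrass polynomial of $h$ in the open disc are cancelled by zeros of $g$, so that this polynomial divides $g$.
\item It also gives directly that $f(X)$, for $X\in V_P$, is obtained by evaluating the series at $t(X)$. This follows by extending the local homomorphism $\mathcal{O}_{\mathcal{E},\overline{P}}\to\mathcal{O}_{\mathbb{C}_p}$ continuously to the completion, and it is exactly what part (2) needs.
\item The paper's route is more hands-on. It needs only the explicit Weierstrass expansions and the preparation theorem, with no appeal to regularity or normality of the model.
\end{itemize}
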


\begin{proof}
\noindent\textup{(1)} By translating by $P$, we may assume that $P=O$.
We note that $t$ identifies the residue disc $V_O$ with the open unit disc $\{ a \in \C_p \mid |a|<1\}$. 
We may write
\begin{align*}
f(t)=\frac{g(t)}{h(t)}
\end{align*}
for some $g(t), h(t) \in \mathcal{O}_K[[t]]$.
(Indeed, $f$ is a rational function in $x$ and $y$, and $t^2x(t), t^3y(t) \in  \mathcal{O}_K[[t]]$.)
Since $f$ has no pole on the residue disc, by the $p$-adic Weierstrass preparation theorem, 
we may take 
$h(t)\in \mathcal{O}_K[[t]]^{\times}$ up to a factor in $K^{\times}$, and the assertion follows. 

\medskip
\noindent\textup{(2)} Let $t$ be a local parameter along the section defined by $T$ on the smooth model.
Then the residue disc $V_T=V_P$ is identified with $T+V_O$.
Write $P=T+R$ and let $t_R$ be the coordinate of $R$ in $V_O$.
If $p^k \equiv 1 \bmod r$, then $p^kT=T$ since $T \in E(K)[r]$, and hence $p^kP=T+p^kR$. 
The point $p^kR$ corresponds to the coordinate 
$[p^k]t_R \in \mathfrak{m}_K^k$. 
By (1) applied to $T$ in place of $P$, we have  $f(t) \in \mathcal{O}_K[[t]] \otimes K$ and  
\begin{align*}
\lim_{\substack {k \to \infty \\ p^k \equiv 1 \bmod r}} f(p^kP)
=\lim_{\substack {k \to \infty \\ p^k \equiv 1 \bmod r}} f([p^k]t_R)
=f(0)=f(T).
\end{align*}
\end{proof}

\subsection{Proof of Theorem \ref{main1}}

We first prove the following theorem, which treats the case $p \nmid r$. The remaining cases are then deduced from this result and the argument in the proof of \cite[Theorem 12]{Si05a}.

\begin{thm}\label{main1'}
Let $E$ be an elliptic curve over $K$ with good reduction, given by a smooth Weierstrass
equation \eqref{WeiEq}. Let $P \in E(K) \setminus \hat{E}(K)$ and let $r \geq 2$ be the order of the reduction of $P$. 
We suppose that $p \nmid r$ and let $T \in E(K)[r]$ be the Teichm\"{u}ller lift of $P$. 
Put $L_0=K(E[4r^2])$, and let
$\omega=\omega_{L_0}:\mathcal{O}_{L_0}^{\times}\to\mu(L_0)$
be the Teichm\"uller character.
Let $\widetilde{T}=(T_n)_{n \geq 1} \in V(E)$ with $T_1=T$.
Then there exists a power $q=p^N$ such that for every positive integer $m$, the limit
$\lim_{k\to \infty} F_{mq^k}(P)$ exists in $\mathcal{O}_K$ and is given by
\begin{align*}
\lim_{k\to \infty} F_{mq^k} (P)=\frac{\sigma^{\rm alg}(m\widetilde{T})}{\omega(\sigma^{\rm alg}(\widetilde{T}))^{m^2}}.
\end{align*}
\end{thm}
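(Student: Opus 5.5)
The plan is to express $F_n(P)$ through the sigma function and then let the formal-group component of $P$ shrink under multiplication by $q^k$.

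\textbf{Step 1: $F_n$ via the sigma function.} Over $\mathbb{C}$ one has the classical identity $F_n(z)=\sigma(nz)/\sigma(z)^{n^2}$; both sides have the right divisor and the right leading coefficient with respect to $t=-2x/y$. I would transport this to the formal setting. Write $P=T+R$ with $R\in\hat E(K)$, where $t_R$ is the coordinate of $R$ and $z_R=\lambda(t_R)$.

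\textbf{Step 2: a translated identity.} Take $z_1$ corresponding to $\widetilde T$ and use $\sigma_{z_1}$ from Definition \ref{sigma_z}. From the transformation law one checks, over $\mathbb{C}$,
\[
F_n(z_1+z)=\frac{\sigma_{nz_1}(nz)}{\sigma_{z_1}(z)^{n^2}}\cdot \epsilon_n ,
\]
where $\epsilon_n$ is a root of unity coming from $\langle\cdot,\cdot\rangle$ (Lemma \ref{rootlem}). Indeed the exponential prefactors $\exp(-\eta(nz_1)(nz+\tfrac n2 z_1))$ and $\exp(-n^2\eta(z_1)(z+\tfrac12 z_1))$ agree exactly, since $\eta$ is linear. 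So in fact $\epsilon_n=1$ with the convention $n\widetilde T=(nT_k)_k$.

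Rewritten with the algebraic sigma series of Definition \ref{def ASS} and Remark \ref{ASSoverC}, this becomes an identity of formal Laurent series in $t$ with algebraic coefficients:
\[
F_n(T+t)=\frac{\widehat{\sigma_{n\widetilde T}}([n]t)}{\widehat{\sigma_{\widetilde T}}(t)^{n^2}} .
\]
This identity holds over $\mathbb{C}$ via an embedding, hence algebraically, hence in $L_0((t))$. It also holds when $nT=O$, using case (2) of Definition \ref{def ASS}.

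\textbf{Step 3: evaluate at $n=mq^k$.} Choose $N$ so that $q\equiv1 \bmod r$ (and $\bmod\ 2r^2h$ as stated). Then $mq^k\widetilde T$ and $m\widetilde T$ have the same $4r^2$-component, so by Remark \ref{rmk algsigma}(2) and the dependence of $\widehat{\sigma}$ only on finite level, $\widehat{\sigma_{mq^k\widetilde T}}=\widehat{\sigma_{m\widetilde T}}$. I should double-check that a $\psi$-type sign does not enter here.

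Evaluate at $t=t_R$. The numerator becomes $\widehat{\sigma_{m\widetilde T}}([mq^k]t_R)$, which tends to $\sigma^{\rm alg}(m\widetilde T)$ because $[mq^k]t_R\to0$ and the series has positive radius of convergence (as in the proof of Lemma \ref{keylem}(2)). To get $t_R$ into that radius, first replace $P$ by $q^{k_0}P$; this is harmless since the subsequence is the same after reindexing $m$.

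The denominator is $u^{m^2q^{2k}}$ with $u=\widehat{\sigma_{\widetilde T}}(t_R)$. By Lemma \ref{keylem}(2), $u$ is a unit congruent to $\sigma^{\rm alg}(\widetilde T)$. Hence $u^{q^{2k}}\to\omega(u)=\omega(\sigma^{\rm alg}(\widetilde T))$, provided $q$ is a power of the residue field size of $L_0$; this is the residue-degree condition on $N$.

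\textbf{Main obstacle.} The hardest step is rigorously justifying the evaluation of the formal identity at the point $t_R$: one must make sure both sides converge there and equal the actual value $F_n(P)$. The right-hand side involves $\sigma(\lambda(t))$, which is not integral and whose convergence radius is only positive by \cite{BKY17}. I would handle this by showing the ratio $\sigma(\lambda([n]t))/\sigma(\lambda(t))^{n^2}$ is the expansion of the rational function $F_n$ (and the $g$-factors are rational functions with no poles on the disc), so Lemma \ref{pWPT}(1) gives convergence on the whole disc. Integrality of the limit, i.e. that it lies in $\mathcal O_K$, then follows from $F_n(P)\in\mathcal O_K$ by Lemma \ref{pWPT}(1) and closedness.
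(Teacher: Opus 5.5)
There is a genuine gap in how you bring the evaluation point into the disc of convergence. Your Step 2 identity is correct; it is the paper's formula for $\tau_T^\ast F_n$, and your check that the exponential prefactors cancel is exactly what is needed. The finite-level replacement of $\widehat{\sigma_{mq^k\widetilde T}}$ by $\widehat{\sigma_{m\widetilde T}}$ in Step 3 also works. Two small corrections there: the relevant level is $4N_m$, with $N_m$ the order of $mT$, not $4r^2$, and $q\equiv 1 \bmod 4r^2$ covers it. Likewise, a general $\widetilde T$ need not come from $\Lambda\otimes\Q$, but the same finite-level argument lets you swap it for one that does.

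Replacing $P$ by $q^{k_0}P$ is not a reindexing of the same subsequence. Since $F_{ab}(P)=F_a(bP)\,F_b(P)^{a^2}$, one has
\[
F_{mq^{k+k_0}}(P)=F_{mq^{k}}(q^{k_0}P)\,F_{q^{k_0}}(P)^{m^2q^{2k}} .
\]
So your argument, applied to $q^{k_0}P$ (whose Teichm\"uller lift is still $T$), only gives
\[
\lim_{k\to\infty}F_{mq^k}(P)=\frac{\sigma^{\rm alg}(m\widetilde T)}{\omega(\sigma^{\rm alg}(\widetilde T))^{m^2}}\cdot \omega\bigl(F_{q^{k_0}}(P)\bigr)^{m^2}.
\]
To reach the theorem you must show $F_{q^{k_0}}(P)\equiv 1 \bmod \mathfrak m_K$, and nothing in your proposal does this. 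You carry the condition $q\equiv 1\bmod 2r^2h$ but never use $h$. In the paper this is exactly where $h$ enters: Silverman's periodicity result \cite[Corollary 9]{Si05a} gives $F_{q^{k_0}}(P)\equiv F_1(P)=1$.

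Within your own framework you could instead argue $F_{q^{k_0}}(P)\equiv F_{q^{k_0}}(T)=\sigma^{\rm alg}(\widetilde T)^{1-q^{2k_0}}\equiv 1$. This uses that $F_n$ is a polynomial in $x,y$ with integral coefficients, your identity at $t=0$, Lemma \ref{keylem}(1), and the residue-degree condition on $q$. Some such argument has to be supplied.

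Your fallback in the last paragraph does not close the gap either. Lemma \ref{pWPT}(1) is about rational functions. It shows that the whole quotient $\widehat{\sigma_{n\widetilde T}}([n]^\ast t)/\widehat{\sigma_{\widetilde T}}(t)^{n^2}=\tau_T^\ast F_n$ converges on the residue disc, which was never in doubt. It says nothing about the individual value $u=\widehat{\sigma_{\widetilde T}}(t_R)$, and that value is what your Step 3 uses. The factor $\sigma(\lambda(t))$ is not the expansion of a rational function and is only known to have positive radius of convergence \cite{BKY17}, so it need not converge at $t_R$.

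The paper never evaluates $\widehat{\sigma_{\widetilde T}}$ at $t_Q$. It works instead with
\[
s_{k_0}(t)=\widehat{\sigma_{\widetilde T}}(t)^{q^{2k_0}}=\widehat{\sigma_{\widetilde T}}([q^{k_0}]^\ast t)\,(\tau_T^\ast F_{q^{k_0}})^{-1},
\]
which does converge at $t_Q$. Its key congruence $s_{k_0}(t_Q)\equiv\sigma^{\rm alg}(\widetilde T)$ again rests on $F_{q^{k_0}}(P)\equiv 1\bmod\mathfrak m_K$.
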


\begin{proof}[Proof of Theorem \ref{main1'}]
Let $h$ be the order of the multiplicative group of the residue field of $K$. 
By \cite[Corollary 9]{Si05a}, the sequence $(F_n(P) \bmod \mathfrak{m}_K )_{n \geq 1}$ is purely periodic and has a period dividing $rh$ if $r \geq 3$ and $2rh$ if $r=2$. 
We let $N \in \mathbb{Z}_{>0}$ be the product of $\varphi (2r^2h)$ and the residue degree of the extension $L_0/\mathbb{Q}_p$, where $\varphi$ is Euler's totient function, and put $q=p^N$. (Note that $q \equiv 1 \bmod 2r^2h$ for our choice of $N$.) Then we have the following lemma.

\begin{lem}\label{lem t-exp of F}
For all positive integers $m$ and $k$, the rational function $\tau_T^\ast F_{mq^k}$ has the $t$-expansion
\begin{align}\label{t-exp of F}
(\tau_T^\ast F_{mq^k})(t)=\frac{\widehat{\sigma_{m\tilde{T}}}([mq^k]^{\ast}(t))}{\widehat{\sigma_{\tilde{T}}}(t)^{m^2q^{2k}}}.
\end{align}
\end{lem}

\begin{proof}
Fix an embedding $K \subset \overline{K} \hookrightarrow \mathbb{C}$. We regard $E$ as an elliptic curve over $\mathbb{C}$ and take the period lattice $\Lambda$ corresponding to $E$ and the uniformization $\Phi:\mathbb{C}/\Lambda \overset{\sim}{\rightarrow} E(\mathbb{C})$ so that $z \mapsto (\wp(z; \Lambda), \wp'(z; \Lambda))$, where $\wp(z; \Lambda) \coloneqq -\zeta'(z; \Lambda)$ is the Weierstrass $\wp$-function. Let $\sigma(z)\coloneqq \sigma(z;\Lambda)$ be the Weierstrass sigma function associated to the lattice $\Lambda$. Then, by \cite[Lemma 7]{Si05a}, we have
\begin{align}\label{Lem7}
F_n(\Phi(z))=\frac{\sigma(nz)}{\sigma(z)^{n^2}}
\end{align}
for all $z \in \mathbb{C}$ and $n\geq 1$. 

Choose $w \in \Lambda\otimes\mathbb{Q}$ representing $T_{4r}$, and put $z_T=4rw$. Then $\Phi (z_T)=T$, and for every divisor $d \mid 4r$, 
\begin{align*}
\Phi (z_T/d)=(4r/d)T_{4r}=T_d.
\end{align*}
Let $\tilde{T}^{\rm an}$ be the element of $V(E)$ associated with $z_T$
under the inclusion $\Lambda \otimes \mathbb{Q} \hookrightarrow V(E)$.
Then $\sigma^{\rm alg}(\tilde{T}^{\rm an})=\sigma^{\rm alg}(\tilde{T})$ by
Remark \ref{rmk algsigma}(2), since $T^{\rm an}_{4r}=T_{4r}$.
As $g_{\tilde{T}}$ is determined by its divisor and its normalization at
$O$, which depend only on $T_1$ and $\sigma^{\rm alg}(\tilde{T})$, the
algebraic sigma series attached to $\tilde{T}^{\rm an}$ and $\tilde{T}$
agree. The same finite-level argument applies to $m\tilde{T}$, as
$4N_m \mid 4r$ for the order $N_m$ of $mT$. We may therefore apply
Remark \ref{ASSoverC} to the given lift $\tilde{T}$.

By \eqref{Lem7} and Definition \ref{sigma_z}, we obtain
\begin{align}\label{translation F}
(\tau_T^\ast F_n)(\Phi(z))
&=F_n(\Phi(z+z_T))=\frac{\sigma_{nz_T}(nz)}{\sigma_{z_T}(z)^{n^2}}.
\end{align}
We set $n=mq^k$ in (\ref{translation F}).
Since $q^k \equiv 1 \bmod 2r^2$, there exists an integer $c$ such that $q^k=2r^2c+1$. Since $rz_T \in \Lambda$, it follows from \eqref{transWt'} that 
\begin{align*}
\sigma_{mq^kz_T}(mq^kz)&=\sigma_{mz_T+2r^2cmz_T}(mq^kz) \\
&=\psi(2r^2cmz_T)\langle \frac{1}{2}mz_T, 2r^2cmz_T \rangle \sigma_{mz_T}(mq^kz)=\sigma_{mz_T}(mq^kz).
\end{align*}
(For the last equality, note that $2r^2cmz_T \in 2\Lambda$, so
$\psi(2r^2cmz_T)=1$, and that
$\langle \frac{1}{2}mz_T, 2r^2cmz_T \rangle =1$ since both entries are
multiples of $z_T$.)
Hence it follows from (\ref{translation F}) that
\begin{align*}
(\tau_T^\ast F_{mq^k})(\Phi(z))
=\frac{\sigma_{mz_T}(mq^kz)}{\sigma_{z_T}(z)^{m^2q^{2k}}}.
\end{align*}
Substituting $z=\lambda(t)$ and using Remark \ref{ASSoverC}, we obtain (\ref{t-exp of F}). Since we may view (\ref{t-exp of F}) as an equality in the fraction field of $K(E[4r^2])[[t]]$, this completes the proof of the lemma.
\end{proof}

We put $Q \coloneqq P-T \in \hat{E}(K)$ and take an element $t_Q\in \mathfrak{m}_K$ corresponding to $Q$.

Since $(\tau_T^\ast F_{q^k})^{-1}$ has the divisor
\begin{align*}
{\rm div}((\tau_T^\ast F_{q^k})^{-1})=q^{2k}(-T)-\sum_{q^{k}R=-T}(R),
\end{align*}
its poles are the points $R$ with $q^kR=-T$. No such $R$ lies on the
residue disc at $O$: if $R$ reduces to the origin, then so does $q^kR$,
whereas the reduction of $-T$ is nonzero. Hence
$(\tau_T^\ast F_{q^k})^{-1}$ has no pole on the residue disc at $O$, and
by Lemma \ref{pWPT}(1) its $t$-expansion is integral up to multiplication
by a constant.
Therefore, for sufficiently large $k_0 \in \mathbb{Z}$ (so that $[q^{k_0}]^{\ast}(t_Q)$ satisfies the assumption of Lemma \ref{keylem}(2)),
\begin{align*}
s_{k_0}(t) \coloneqq \widehat{\sigma_{\tilde{T}}}(t)^{q^{2k_0}}=\widehat{\sigma_{\tilde{T}}}([q^{k_0}]^{\ast}(t)) \cdot (\tau_T^\ast F_{q^{k_0}})^{-1} \in K(E[4r^2])[[t]]
\end{align*}
is convergent at $t=t_Q$. Since $F_{q^{k_0}}(P) \equiv F_1(P) =1 \bmod \mathfrak{m}_K$ by our choice of $N$ and $|\widehat{\sigma_{\tilde{T}}}([q^{k_0}]^{\ast}(t_Q))-\sigma^{\rm alg}(\tilde{T})|<1$ by Lemma \ref{keylem}(2), we see that $|s_{k_0}(t_Q)-\sigma^{\rm alg}(\tilde{T})|<1$ and hence $s_{k_0}(t_Q)^{q^{2k-2k_0}} \to \omega(\sigma^{\rm alg}(\tilde{T}))$ as $k \to \infty$. Since $[mq^k]^{\ast}(t_Q) \to 0$, the numerator of (\ref{t-exp of F}) tends to $\sigma^{\rm alg}(m\tilde{T})$ by Lemma \ref{keylem}(2) if $mT \neq O$, and directly from the definition if $mT=O$. Thus, substituting $t=t_Q$ into (\ref{t-exp of F}), we obtain
\begin{align*}
F_{mq^k}(P)
=\frac{\widehat{\sigma_{m\tilde{T}}}([mq^k]^{\ast}(t_Q))}{s_{k_0}(t_Q)^{m^2q^{2k-2k_0}}}
\to \frac{\widehat{\sigma_{m\tilde{T}}}(0)}{\omega(s_{k_0}(t_Q))^{m^2}}= \frac{\sigma^{\rm alg}(m\widetilde{T})}{\omega(\sigma^{\rm alg}(\widetilde{T}))^{m^2}}
\end{align*}
as $k \to \infty$. Since each $F_{mq^k}(P)$ lies in $K$ and the right-hand side is either zero or a $p$-adic unit, the limit belongs to $\mathcal{O}_K$. This completes the proof of the theorem.
\end{proof}

\begin{proof}[Proof of Theorem \ref{main1}]
If $p \nmid r$, we have already proved the assertion in Theorem \ref{main1'}. If $r=p^j$ for some positive integer $j$, it follows from \cite[Theorem 6.1]{St16} that $\lim_{k \to \infty}v_p(F_{mp^k}(P))=+\infty$.

We consider the third case, that is, $j \geq 1$ and $r' \geq 2$. Since the reduction of $P'=p^jP \bmod \mathfrak{m}_K$ has order $r' \geq 2$ and $p \nmid r'$, Theorem \ref{main1'} implies that there exists a power $q=p^N$ such that the limit
\begin{align*}
\lim_{k\to \infty} F_{mq^k} (P')=\frac{\sigma^{\rm alg}(m\widetilde{T})}{\omega(\sigma^{\rm alg}(\widetilde{T}))^{m^2}}
\end{align*}
exists for any positive integer $m$. If we put $m'=mp^{(N-1)j}$, it follows from \cite[Appendix I, Proposition 2]{MT91} that
\begin{align*}
F_{m q^{k+j}} (P)=F_{m'q^k} (P')F_{p^j}(P)^{(m'q^k)^2}.
\end{align*}
Hence we have
\begin{align*}
\lim_{k\to \infty} F_{mq^{k+j}} (P)=\frac{\sigma^{\rm alg}(m'\widetilde{T})}{\omega(\sigma^{\rm alg}(\widetilde{T}))^{m'^2}} \cdot \omega (F_{p^j}(P))^{m'^2}.
\end{align*}
(We also note that $F_{p^j}(P)$ is a $p$-adic unit in $K$ by \cite[Theorem 6.1]{St16}.)
%
%
\end{proof}


\begin{rmk}\label{depends only on T}
Since the right-hand side of Theorem \ref{main1'} is independent of the choice of $\widetilde{T}$ as noted in the proof of  Lemma \ref{lem t-exp of F}, so is that of Theorem \ref{main1}.
\end{rmk}

\section{Application to elliptic divisibility sequences}

In this section, we introduce elliptic divisibility sequences, relate them
to division polynomials (Proposition \ref{prop18}), and derive from
Theorem \ref{main1} explicit $p$-adic limit formulas for nonsingular
elliptic divisibility sequences (Corollary \ref{cor EDS}); in particular,
a conjecture of Silverman (Conjecture \ref{silverman conj}) holds at every
prime $p \geq 5$ of good reduction.
The standard material in Subsection 4.1 is recalled mainly from
\cite[Sections 8--10]{Si05a}; see also \cite{Ay92}, \cite{Ay93},
\cite{Sh00} and  \cite{Wa48}.

\subsection{Elliptic divisibility sequences and division polynomials}

\begin{dfn}
Let $W=(W_n)_{n \geq 0}$ be a sequence of rational integers.
\begin{enumerate}
\item If $W$ satisfies the condition
\begin{align*}
n \mid m \Rightarrow W_n \mid W_m,
\end{align*}
$W$ is called a {\it divisibility sequence}.
\item If a divisibility sequence $W$ satisfies the recurrence relation
\begin{align*}
W_{m+n}W_{m-n}=W_{m+1}W_{m-1}W_n^2-W_{n+1}W_{n-1}W_m^2
\end{align*}
for all $1 \leq n \leq m$, $W$ is called an {\it elliptic divisibility sequence} (abbreviated EDS).
\item We call an elliptic divisibility sequence $W$ {\it proper} if $W_0=0$, $W_1=1$, and $W_2W_3 \neq 0$.
\end{enumerate}
\end{dfn}

Ward \cite{Wa48} proved that every proper elliptic divisibility sequence $W$ determines a cubic curve $E_W$ over $\Q$ together with a point $P_W \in E_W(\Q)$.
We call a proper elliptic divisibility sequence $W$ {\it nonsingular} if
$E_W$ is nonsingular or, equivalently, if Ward's discriminant
${\rm Disc}(W)$ is nonzero.
We refer to Ward \cite{Wa48} and to the appendix of Silverman--Stephens \cite{SS06} for the explicit formulas. 
For our purposes, the essential point is the following
division-polynomial representation.



\begin{prop}[{\cite[Proposition 18]{Si05a}}]\label{prop18}
Let $W=(W_n)_{n \geq 0}$ be a nonsingular EDS and $(E_W, P_W)$ the pair associated with $W$. Fix a minimal Weierstrass equation for $E_W$. Then there exists a constant $\gamma \in \mathbb{Q}^{\times}$ such that for every integer $n \geq 1$,
\begin{align*}
W_n=\gamma^{n^2-1}F_n(P_W).
\end{align*}
Moreover, the denominator of $\gamma$ is divisible only by primes of bad reduction for $E_W$ at which $P_W \bmod p$ is the singular point of the reduced cubic curve.
\end{prop}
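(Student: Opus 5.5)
Granting the hypotheses of the statement, the plan is to reproduce Silverman's argument with the Weierstrass sigma function: the analytic identity \eqref{Lem7} becomes, under the uniformization, an identity of elliptic functions relating $W$ to $F_n(P_W)$.

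\emph{Step 1.} Recall Ward's construction. The initial terms $W_2,W_3,W_4$ give explicit rational coefficients for a cubic $E_W$ and a point $P_W$. Nonsingularity ($\mathrm{Disc}(W)\neq0$) makes $E_W$ an elliptic curve over $\mathbb{Q}$.

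\emph{Step 2.} Compare $W_n$ with $F_n(P_W)$ computed on the fixed minimal model. The sequence $V_n:=F_n(P_W)$ satisfies the same recurrence, since the division polynomials do, and the map $W_n\mapsto\gamma^{n^2-1}W_n$ preserves the recurrence because $(m+n)^2+(m-n)^2-2=(m+1)^2+(m-1)^2+2n^2-4$. A proper EDS with $W_2W_3\neq0$ is determined by $W_1,\dots,W_4$. Hence it suffices to find $\gamma\in\mathbb{Q}^\times$ with $W_n=\gamma^{n^2-1}V_n$ for $n=2,3,4$.

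Ward's curve is built so that this holds for a Weierstrass model attached to $W$. A change of coordinates $x=u^2x'+\cdots$ to the minimal model rescales $F_n$ by $u^{n^2-1}$, because $F_n$ has weight $n^2-1$ in the normalization with $t^{n^2}F_n/[n]^*t\to1$. Thus $\gamma$ is essentially $u^{\pm1}$, possibly combined with Ward's normalizing factor such as $W_2$; this gives the first assertion.

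\emph{Step 3 (main obstacle).} We must control the denominator of $\gamma$. Fix a prime $\ell$ with $\ell$ in the denominator of $\gamma$. Since the $W_n$ are integers, $V_n=\gamma^{1-n^2}W_n$ acquires $\ell$-adic valuation at least $(n^2-1)v_\ell(\gamma^{-1})$, so $v_\ell(F_n(P_W))$ grows quadratically in $n$.

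At a prime of good reduction, or at a bad prime where $P_W$ reduces to a nonsingular point, this is impossible. In those cases Stange's result (\cite[Theorem 6.1]{St16}) applies: the valuations $v_\ell(F_n(P))$ are bounded below for $P\notin\hat E$. For $P\in\hat E$, and more generally for points in the nonsingular locus, the valuations grow only like $v_\ell(n)$ plus a constant, namely the denominator exponent of $nP$ minus the weight. Quadratic growth forces the reduction of $P_W$ to be the singular point, which gives the last assertion.

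The delicate part is making the argument via local heights and the decomposition $\log|F_n(P)|_\ell=\lambda_\ell(nP)-n^2\lambda_\ell(P)$ rigorous in the bad-reduction nonsingular-point case. I would cite Silverman's treatment of the local height quadratic part there, as in \cite{Si05a}.
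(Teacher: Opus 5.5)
The paper gives no proof of this statement; it quotes \cite[Proposition 18]{Si05a}. Your overall plan is the standard route to that result: Ward's model, then the weight-$(n^2-1)$ rescaling $F_n\mapsto u^{n^2-1}F_n$ under the change of variables to the minimal model, then a valuation comparison for the denominator. Step 2 is fine once you drop the hedge about an extra factor such as $W_2$. Ward's theorem gives $W_n=\sigma(nz;\Lambda)/\sigma(z;\Lambda)^{n^2}$ with $g_2(\Lambda),g_3(\Lambda)\in\mathbb{Q}$. By \eqref{Lem7} this is exactly $F_n(P_W)$ on $y^2=4x^3-g_2x-g_3$, so $\gamma=u^{\pm1}\in\mathbb{Q}^{\times}$. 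Independently of models, you can also argue that $\gamma^3=W_2/F_2(P_W)$ and $\gamma^8=W_3/F_3(P_W)$ are rational, hence so is $\gamma=\gamma^9/\gamma^8$.

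\textbf{The gap: Step 3 argues in the wrong direction.}

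\emph{Why the cited bound does not help.} From $v_\ell(\gamma)<0$ and $W_n\in\mathbb{Z}$ you get a \emph{lower} bound $v_\ell(F_n(P_W))\ge (n^2-1)\,|v_\ell(\gamma)|$. Citing that the valuations are bounded below (\cite[Theorem 6.1]{St16}) therefore contradicts nothing. What you need is a single $n$ with $F_n(P_W)\neq 0$ and $v_\ell(F_n(P_W))<n^2-1$.

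\emph{The case $P_W\in\hat{E}$.} Your description is also inaccurate there. In that case $v_\ell(F_n(P_W))=v_\ell(t(nP_W))-n^2v_\ell(t(P_W))$, which tends to $-\infty$ quadratically. So ``grows like $v_\ell(n)$ plus a constant'' is true only as an upper bound.

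\emph{The fix.} No local-height asymptotics are needed; one well-chosen $n\in\{2,3\}$ suffices, and $W_n\neq 0$ by properness.
\begin{itemize}
\item Suppose $P_W$ reduces to a nonsingular point $\neq O$ of order $r\ge 2$. Choose $n\in\{2,3\}$ with $r\nmid n$. At a point with nonsingular reduction, $\ell\mid F_n(P_W)$ would force $nP_W$ to reduce to $O$ (Ayad; cf.\ \cite{Ay92}). Hence $F_n(P_W)$ is an $\ell$-adic unit, and $(n^2-1)v_\ell(\gamma)=v_\ell(W_n)\ge 0$.
\item Suppose $P_W$ reduces to $O$. Choose $n\in\{2,3\}$ prime to $\ell$. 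Then $[n]$ is an automorphism of the formal group, so $v_\ell(t(nP_W))=v_\ell(t(P_W))$. This gives $(n^2-1)v_\ell(\gamma)=v_\ell(W_n)+(n^2-1)v_\ell(t(P_W))>0$, which is precisely the $n=2$ computation in the proof of Corollary~\ref{cor EDS}(1).
\end{itemize}
Together these cases cover every good prime and every bad prime at which $P_W$ is nonsingular mod $\ell$, which is exactly the second assertion.
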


\subsection{Explicit $p$-adic limits for elliptic divisibility sequences}

Silverman proposed the following conjecture, motivated by results of Silverman and Stephens \cite{SS06}.

\begin{conj}[{\cite[Conjecture 19]{Si05a}}]\label{silverman conj}
Let $W=(W_n)_{n \geq 0}$ be an EDS and let $p$ be a prime. Then there exists a positive integer $N$ such that for every positive integer $m$, the limit 
\begin{align}\label{lim W}
\lim_{k \to \infty} W_{mp^{kN}}
\end{align}
exists in $\mathbb{Z}_p$ and is algebraic over $\mathbb{Q}$.
\end{conj}

Silverman \cite[Theorem 20]{Si05a} showed that Conjecture \ref{silverman conj} is true for EDSs that are {\it general} in the sense of \cite{Si05a} (that is, for proper nonsingular EDSs whose associated point has infinite order) and for all primes outside an explicitly described exceptional set.
However, as he noted in the addendum \cite{Si05b}, Ayad \cite{Ay93} essentially proved the convergence of the sequence in \eqref{lim W} for all but finitely many primes by different methods.
(Although the addendum \cite{Si05b} did not prove the algebraicity of the limit \eqref{lim W}, it follows from Lemma \ref{pWPT} and the relation
\begin{align*}
W_{mq^{k+1}}=\gamma^{q^2-1}F_q(mq^kP_W)W_{mq^k}^{q^2}
\end{align*}
unless the prime-to-$p$ part of the reduction order of $P_W$ divides $m$.)

As a consequence of Theorem \ref{main1}, we obtain explicit $p$-adic limits for EDSs at every prime $p \geq 5$ of good reduction. This proves Conjecture \ref{silverman conj} for every nonsingular elliptic divisibility sequence at every prime $p \geq 5$ of good reduction; for ordinary reduction, this recovers the corresponding case of \cite[Theorem 20]{Si05a}.

\begin{cor}\label{cor EDS}
Let $W=(W_n)_{n \geq 0}$ be a nonsingular EDS and $(E_W, P_W)$ the pair associated with $W$. 
Let $p \geq 5$ be a prime of good reduction for $E_W$.
Fix a Weierstrass equation of the form \eqref{WeiEq} for $E_W$ that is
defined over $\Z_{(p)}$ and smooth at $p$.
Choose  a nonzero $\gamma \in \mathbb{Z}_{(p)}$ such that 
\[
W_n=\gamma^{n^2-1}F_n(P_W) \qquad (n\geq1).
\]
\begin{enumerate}
\item If $P_W \in \widehat{E_W}(\Q_p)$, then for every positive integer $m$ we have
\begin{align*}
\lim_{k \to \infty} W_{mp^{k}}=0
\end{align*}
with respect to the $p$-adic topology.
\item Suppose that $P_W \not\in \widehat{E_W}(\Q_p)$, and let $q=p^N$ as in Theorem \ref{main1}. Put
\begin{align*}
L_m(E_W, P_W) \coloneqq \lim_{k \to \infty} F_{mq^k}(P_W).
\end{align*}
Then for every positive integer $m$ we have
\begin{align*}
\lim_{k \to \infty} W_{mq^{k}}=
\begin{cases}
0, & \text{if $v_p(\gamma)>0$}, \\
\gamma^{-1}\omega(\gamma)^{m^2} L_m(E_W, P_W), & \text{if $v_p(\gamma)=0$}.
\end{cases}
\end{align*}
In particular, the limit is algebraic. When $v_p(\gamma)=0$, it vanishes if and only if the prime-to-$p$ part of the order of the reduction of $P_W$ divides $m$.
\end{enumerate}
\end{cor}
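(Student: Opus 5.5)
The plan is to reduce everything to the relation $W_n=\gamma^{n^2-1}F_n(P_W)$ together with Theorem \ref{main1} for $E_W$ over $K=\mathbb{Q}_p$, separating the behaviour of the power of $\gamma$ from that of $F_n(P_W)$.

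\emph{Part (1).} Suppose $P_W\in\widehat{E_W}(\mathbb{Q}_p)$, i.e. $P_W$ reduces to $O$. Write $t_0=t(P_W)\in p\mathbb{Z}_p$. Since the equation is smooth at $p$ and $p\ge5$, the function $F_n$ in the formal group has the expansion $F_n=(n t)^{\ldots}$; concretely, using $\mathrm{div}(F_n)=[n]^*(O)-n^2(O)$ and the normalization, $F_n(P)=[n]^*(t)(P)^{-1}\,t(P)^{-n^2}\cdot u$ up to a unit, or more robustly I would invoke \cite[Theorem 6.1]{St16}, which gives $v_p(F_{n}(P_W))$ explicitly for $P$ in the formal group: roughly $v_p(F_n(P))= -n^2 v_p(t_0)+v_p(\text{coordinate of } nP)$. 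Then $v_p(W_{mp^k})=(m^2p^{2k}-1)v_p(\gamma)+v_p(F_{mp^k}(P_W))$. Here is the main subtlety: $v_p(F_{mp^k}(P_W))\approx -m^2p^{2k}v_p(t_0)+v_p(t(mp^kP_W))$ is very negative, so one needs $v_p(\gamma)\ge v_p(t_0)$ coming from integrality of $W$. Indeed $W_n\in\mathbb{Z}$ for all $n$ forces $(n^2-1)v_p(\gamma)-n^2v_p(t_0)+v_p(t(nP_W))\ge0$, hence $v_p(\gamma)\ge v_p(t_0)$ letting $n\to\infty$ along $n$ prime to $p$ (where $v_p(t(nP))=v_p(t_0)$). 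Then $v_p(W_{mp^k})\ge (m^2p^{2k}-1)(v_p(\gamma)-v_p(t_0))+v_p(t(mp^kP_W))-v_p(t_0)\ge v_p(t(mp^kP_W))-v_p(t_0)$, and the latter is $\ge k$ since multiplication by $p$ on the formal group raises valuation of $t$ by at least $1$. So $W_{mp^k}\to0$.

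\emph{Part (2).} Now $P_W\notin\widehat{E_W}(\mathbb{Q}_p)$, so Theorem \ref{main1} gives $F_{mq^k}(P_W)\to L_m(E_W,P_W)\in\mathbb{Z}_p$. If $v_p(\gamma)>0$, then $\gamma^{m^2q^{2k}-1}\to0$ and $F_{mq^k}(P_W)$ is bounded (it lies in $\mathcal{O}_K$ for $k$ large by convergence), so $W_{mq^k}\to0$. If $v_p(\gamma)=0$, write $W_{mq^k}=\gamma^{-1}(\gamma^{q^{2k}})^{m^2}F_{mq^k}(P_W)$; since $q\equiv1$ modulo $p-1$ (as $h=p-1$ divides $2r'^2h$ and $p^N\equiv1 \bmod 2r'^2h$), $\gamma^{q^{k}}\to\omega(\gamma)$, hence $\gamma^{q^{2k}}\to\omega(\gamma)$, giving the stated limit $\gamma^{-1}\omega(\gamma)^{m^2}L_m$.

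Algebraicity follows from Corollary \ref{cor alg} (the Weierstrass coefficients are in $\mathbb{Q}$), together with $\gamma\in\mathbb{Q}$ and $\omega(\gamma)$ a root of unity; the vanishing criterion follows from Corollary \ref{cor alg} since $\gamma^{-1}\omega(\gamma)^{m^2}\ne0$. The main obstacle I expect is Part (1): making the valuation bookkeeping for $F_n$ on the formal group precise, where I would rely on the explicit formula of \cite[Theorem 6.1]{St16} and the integrality of $W$ to control $v_p(\gamma)$.
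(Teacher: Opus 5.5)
Your proposal is correct and follows essentially the same route as the paper. In part (1) you combine Stange's formula $v_p(F_n(P_W))=-n^2v_p(t(P_W))+v_p(t(nP_W))$ with the integrality of $W$ to get $v_p(\gamma)\ge v_p(t(P_W))$; in part (2) you split off the factor $\gamma^{m^2q^{2k}-1}$ and invoke Theorem \ref{main1} and Corollary \ref{cor alg}.

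The differences are minor:
\begin{enumerate}
\item Your first heuristic expression has $[n]^*(t)(P)$ inverted, but the formula you then take from Stange is the right one.
\item A single $n\ge 2$ prime to $p$ already gives $(n^2-1)(v_p(\gamma)-v_p(t(P_W)))\ge 0$, so no limit in $n$ is needed; the paper simply uses $n=2$.
\item The congruence $q\equiv 1 \bmod p-1$ is not needed, since $\omega(\gamma)^{p}=\omega(\gamma)$ holds automatically.
\item The paper also includes a line justifying that $\gamma$ can be taken in $\mathbb{Z}_{(p)}$, using a change of variables with $v_p(u)=0$ to a minimal model.
\end{enumerate}
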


\begin{proof}
The chosen equation is related to a global minimal equation for $E_W$ by a
change of variables $(x,y) \mapsto (u^2x+r,\, u^3y+sx+t)$ with
$u \in \Q^{\times}$, under which $F_n$ is multiplied by $u^{n^2-1}$. Since
both equations have unit discriminant at $p$, we have $v_p(u)=0$, and the
constant of Proposition~\ref{prop18} yields a constant
$\gamma \in \Z_{(p)}$ as above.

\noindent\textup{(1)} We write $t(P)=-2x/y$ for $P=(x, y) \in \widehat{E_W}(\Q_p)$. From $(6.5)$ in \cite{St16}, we have
\begin{align*}
v_p(F_n(P_W))=-n^2 v_p(t(P_W))+ v_p(t(nP_W)).
\end{align*}
Since $W_n=\gamma^{n^2-1}F_n(P_W)$ for every integer $n \geq 1$, we obtain
\begin{align}\label{val W_n}
v_p(W_n)=( v_p(\gamma)-v_p(t(P_W))) n^2+v_p(t(nP_W))-v_p(\gamma).
\end{align}
Since $[2]^*(t)=2t+O(t^2)$, we have $v_p(t(2P_W))=v_p(t(P_W))$. As $W_2$ is a nonzero integer, the case $n=2$ of \eqref{val W_n} gives $0\leq v_p(W_2)=3\bigl(v_p(\gamma)-v_p(t(P_W))\bigr)$, and hence $v_p(\gamma)\geq v_p(t(P_W))$. Taking $n=mp^k$ in \eqref{val W_n}, we see from $v_p(t(mp^kP_W)) \to \infty$ that its right-hand side tends to $\infty$.

\medskip
\noindent\textup{(2)} If $\gamma$ is a $p$-adic unit, then
\begin{align*}
\gamma^{m^2q^{2k}-1} \to \gamma^{-1}\omega(\gamma)^{m^2} \quad (k \to \infty)
\end{align*}
whereas if $v_p(\gamma)>0$, the same factor tends to zero. Hence the assertion follows from
\begin{align*}
W_{mq^{k}}=\gamma^{m^2q^{2k}-1}F_{mq^k}(P_W)
\end{align*}
and Theorem \ref{main1} together with Corollary \ref{cor alg}.
\end{proof}

\vspace{10pt}

\noindent
Institute of Mathematics for Industry, Kyushu University,\\
744, Motooka, Nishi-ku, Fukuoka, 819-0395, Japan,\\
E-mail address: \textbf{yu.katagiri.s3@gmail.com}\\

\noindent
Faculty of Mathematics, Kyushu University,\\
744, Motooka, Nishi-ku, Fukuoka, 819-0395, Japan,\\
E-mail address: \textbf{kobayashi@math.kyushu-u.ac.jp}

\end{document}